\DeclareMathAlphabet{\mathcal}{OMS}{cmsy}{m}{n}
\newcolumntype{L}{>{$}l<{$}} 
\newcommand\blfootnote[1]{
	\begingroup
	\renewcommand\thefootnote{}\footnote{#1}
	\addtocounter{footnote}{-1}
	\endgroup
}
\def \mbc {\mathbb{C}}
\def \mbp {\mathbb{P}}
\def \mso {\mathcal{O}}
\def \mcf {\mathcal{F}}
\def \mcd {\mathcal{D}}
\def \mce {\mathcal{E}}
\def \mbp {\mathbb{P}}
\def \mbf {\mathbb{F}}
\def \mbz {\mathbb{Z}}
\def \pic {\textup{Pic}}
\def \ch {\textup{CH}}
\def \ra {\rightarrow}
\newtheorem{thm}{Theorem}[section]
\newtheorem{lem}[thm]{Lemma}
\newtheorem{prop}[thm]{Proposition}
\newtheorem{cor}[thm]{Corollary}
\newtheorem{conj}[thm]{Conjecture}
\newtheorem*{conj-no}{Conjecture}
\theoremstyle{definition}
\newtheorem{defn}[thm]{Definition}
\newtheorem{rmk}[thm]{Remark}
\newtheorem{exmp}[thm]{Example}
\numberwithin{thm}{section}
\title{The Curves of Elliptic Nodes on K3 Surfaces}
\author{Huang Jiexiang}
\begin{document}
	\begin{abstract}
		Let $(X,L)$ be a polarized K3 surface of genus $g$ and $C_{en} \subset X$ be the  curve of singular points of nodal elliptic curves in $|L|$. When $(X,L)$ is generic of genus two, Huybrechts observed that the curve $C_{en}$ is a constant cycle curve and conjectured that this remains true for higher genus cases. In this note, we show that the conjecture holds true for polarized K3 surfaces $(X,L)$ lying in a locus of codimension one in the moduli space of polarized K3 surfaces of genus $g$ for every $g > 2$. 
	\end{abstract}
	
	\maketitle

	\section{Introduction}
	Constant cycle curves on complex projective K3 surfaces are curves whose points represent the same class in $\ch_0(X)$. Rational curves on K3 surfaces are the  simplest examples and those of higher genus were sysmetically studied in \cite{Huybrechts2014}. Among them a  particularly interesting example arises from  curves in the fixed locus of a non-symplectic automorphism of a K3 surface (see \cite[Prop.~7.1]{Huybrechts2014}). 
	
	In this note, we study further concrete examples of constant cycle curves on K3 surfaces, especially  those generic  in the moduli space of polarized K3 surfaces. A natural candidate for a generic polarized K3 surface $(X,L)$ of genus $g$, suggested by Huybrechts, is constructed as follows. Recall from \cite{MoriMukai} that for  a generic K3 surface there exists a one-dimensional family of nodal elliptic curves in $|L|$. 
	 Their singularities sweep out a curve $C_{en}$ (up to taking the closure)  on $X$ and we call it the  \textit{curve of elliptic nodes} of $(X,L)$. 
	 
	 When $g =2$, Huybrechts observed that the curve $C_{en}$  coincides with the fixed locus of the natural involution on $X$ (see \Cref{geointer_dbplane}), thus is a constant cycle curve. However, a similar geometric interpretation for $C_{en}$ no longer exists when the genus of $(X,L)$  becomes higher. We wonder whether the  curve of elliptic nodes $C_{en}$  in the cases $g \geqslant 3$ nevertherless still offers new examples of constant cycle curves. This leads to the following conjecture.
	 
\begin{conj}[Huybrechts] \label{intro_conj}
		Let $(X,L)$ be a  polarized K3 surface of genus $g \geqslant 3 $. Then the  curve $C_{en}$ of elliptic nodes  of $(X,L)$, if nonempty, is a constant cycle curve.
	\end{conj}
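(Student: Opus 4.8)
The plan is to reduce the conjecture to a pointwise statement about the Beauville--Voisin class, to realize the nodes inside the one-parameter family and anchor their class on finitely many degenerate members, and then to propagate that class along the family; the propagation is where the genuine difficulty sits.

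\emph{Reduction.} I would first recall the Beauville--Voisin class $c_X\in\ch_0(X)$, the class of any point lying on a rational curve \cite{beauville2004chow}, together with two standard facts: $\ch_0(X)_0$ is torsion-free by Roitman's theorem \cite{Rojtman} (as $X$ is simply connected, so $\mathrm{Alb}(X)=0$), and the common class of any constant cycle curve is forced to be $c_X$ — indeed any curve meets, after moving in a suitable linear system, one of the abundant rational curves on $X$ \cite{chengounelasliedtke2022curves}, and the intersection point then carries both classes. It therefore suffices to prove that every node $p$ of a nodal elliptic member of $|L|$ satisfies $[p]=c_X$ in $\ch_0(X)$; equivalently, writing $n\colon\widetilde{C}_{en}\to C_{en}$ for the normalization, that $n_*\colon\ch_0(\widetilde{C}_{en})_0\to\ch_0(X)_0$ vanishes.

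\emph{Family and anchoring.} By \cite{MoriMukai} the nodal elliptic members sweep out a one-parameter family; let $\pi\colon\mce\to B$ be this family over a smooth curve $B$, let $\tilde{\mce}\to B$ be its simultaneous normalization, with smooth genus-one fibers $\tilde E_b$, and let $\phi\colon\tilde{\mce}\to X$ be evaluation. After an \'etale base change the $g-1$ nodes of each fiber are carried by sections $s_i^{\pm}\colon B\to\tilde{\mce}$ with $\phi\circ s_i^{+}=\phi\circ s_i^{-}=:\gamma_i$, so that $C_{en}$ is the union of the node tracks $\gamma_i(B)$, and the defining relation $\phi_*\big([s_i^{+}(b)]-[s_i^{-}(b)]\big)=0$ holds for all $b$. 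For the anchor, observe that the genus-one locus degenerates, at finitely many boundary parameters, to nodal rational members of $|L|$ (the geometric genus dropping from one to zero as an extra node is acquired, i.e. the elliptic family meeting the finite set of rational curves in $|L|$). On such a rational member every point, and in particular every node, equals $c_X$; this pins $[\gamma_i(b)]=c_X$ at those finitely many parameters.

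\emph{Propagation, and the main obstacle.} The remaining and decisive task is to show $[\gamma_i(b)]=c_X$ for \emph{all} $b$, i.e. that $\phi_*\big([s_i^{+}(b)]-[s_i^{+}(b_0)]\big)=0$ for an anchor $b_0$. This is not formal: it is the pushforward of a degree-zero cycle on the surface $\tilde{\mce}$ supported on a section, and its triviality is controlled by the transcendental part of $\ch_0$. Concretely, the symplectic form $\sigma$ pulls back to a two-form $\phi^{*}\sigma$ on $\tilde{\mce}$ that restricts to zero on each fiber, hence is a family of abelian differentials $\alpha_b\in H^{0}(\tilde E_b,\Omega^1)$; the first-order variation of $b\mapsto[\gamma_i(b)]$ in $\ch_0(X)_0$ is then the $H^{2,0}(X)$-valued Mumford--Voisin infinitesimal invariant obtained by pairing the node difference $s_i^{+}(b)-s_i^{-}(b)\in\pic^0(\tilde E_b)$ against $\alpha_b$ \cite{MumfordChowzero,voisin2003bookvolII}. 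The goal is to show this invariant vanishes identically — equivalently, that the node-difference sections of the relative Jacobian lie in the kernel of the period map induced by $\sigma$ — which forces $b\mapsto[\gamma_i(b)]$ to be locally constant and, with the anchors, identically $c_X$. I expect this vanishing to be the main obstacle: it is a genuinely transcendental constraint reflecting the infinite-dimensionality of $\ch_0$ when $h^{2,0}\neq0$, so it cannot follow from the family structure alone, and establishing it uniformly over the moduli of $(X,L)$ is the hard content of the conjecture. The concrete mechanism I would try to produce is a symmetry of $\tilde{\mce}$ interchanging $s_i^{+}$ and $s_i^{-}$ and acting by $-1$ on $\sigma$ — exactly what the deck involution of $X\to\mbp^2$ supplies in the genus-two case (cf. \cite[Prop.~7.1]{Huybrechts2014}) — so that the essential step becomes exhibiting such a symmetry, or a Jacobian-theoretic substitute for it, for every $(X,L)$.
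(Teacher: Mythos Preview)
The statement you are attempting is an open conjecture; the paper does not prove it. What the paper establishes is the partial result \Cref{mainthm}: the conjecture holds along the codimension-one hyperelliptic locus $\mcf_{n,r}\subset\mcf_g$ (with $n\in\{0,1\}$). There the argument is short and global on $X$: for a generic $(X,L_r)\in\mcf_{n,r}$ one has the double cover $\pi\colon X\to\mbf_n$, and one checks directly that the nodes of nodal elliptic members of $|L_r|$ are exactly the ramification points of $\pi$, so $C_{en}=\pi^{-1}(B)$ is the fixed curve of the non-symplectic deck involution and hence a constant cycle curve by \Cref{intro_prop_nonsymplectic}. No family over $V_{L,g-1}$, no infinitesimal invariant, and no anchoring at rational degenerations is used.

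Your proposal is not a proof but an outline that honestly isolates the missing step: the ``propagation'' of $[\gamma_i(b)]=c_X$ along $B$ is precisely what remains unproved, and you yourself flag it as the hard content. So there is a genuine gap, and it is the whole conjecture. Two further remarks on the strategy. First, the mechanism you propose to close the gap --- an involution of the family interchanging $s_i^{+}$ and $s_i^{-}$ and acting by $-1$ on $\sigma$ --- is exactly the structure the paper exploits, but only after restricting to the hyperelliptic locus where such an involution lives on $X$ itself; for a generic $(X,L)\in\mcf_g$ with $g\geqslant 3$ no non-symplectic automorphism exists, so this mechanism cannot be available generically. Second, the paper's \Cref{discussion} makes this obstruction precise at the level of your sections: it shows that if the node-branch divisors $\tilde{\mcd}_i=s_i^{+}(B)+s_i^{-}(B)$ on $\tilde{\mce}$ are pairwise linearly equivalent --- a natural strengthening of the fibrewise condition your infinitesimal argument would need --- then $(X,L)$ is forced to be hyperelliptic. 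In other words, the very route you sketch (lifting the equality of node classes to the elliptic fibres, or producing a symmetry doing so) is shown in the paper to collapse back onto the special locus already handled, and cannot reach the generic case.
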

	Note that here we allow non-generic polarized K3 surfaces. For details we refer to \Cref{formaldef} and \Cref{nonempty}.
	
	\blfootnote{The author is supported by the ERC Synergy Grant HyperK (ID 854361).}

	In this paper, we give the following partial answer to \Cref{intro_conj} (see  \Cref{proof} for the proof).
	
	\begin{thm}\label{mainthm}
		Let $\mcf_g$ be the moduli space of polarized K3 surfaces of genus $g$. For each $g>2$, there exists a locus of codimension one in $\mcf_g$ for which \Cref{intro_conj} holds.
	\end{thm}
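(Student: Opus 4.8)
The plan is to recast \Cref{intro_conj} as a statement about rational equivalence of the nodes and then to engineer, on a Noether--Lefschetz divisor of $\mcf_g$, just enough extra geometry to force those nodes together in $\ch_0(X)$. By definition $C_{en}$ is a constant cycle curve exactly when the node $x_t$ of the nodal elliptic member $E_t \in |L|$ has a class $[x_t] \in \ch_0(X)$ independent of $t$. So first I would organize the nodal elliptic members into a family: a geometric genus one member of $|L| \cong \mbp^g$ carries $g-1$ nodes, each imposing one condition, so these members sweep out a one-dimensional parameter variety $B \subset |L|$ (nonempty by Mori--Mukai), with normalizations $\nu_t \colon \tilde E_t \to E_t \subset X$ forming a family of elliptic curves over $B$ whose nodes trace out $C_{en}$. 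I would record the adjunction constraint $\nu_t^* L \cong \mso_{\tilde E_t}\!\left(\sum_i (a_{i,t} + b_{i,t})\right)$, where $a_{i,t}, b_{i,t}$ are the two branches over the $i$-th node; this pins the branch divisor inside a fixed degree $2g-2$ linear system and is the main tool for controlling how $B$ varies.

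To conclude that the $[x_t]$ agree, I would use one of two rigidity mechanisms. The first is a $\mbp^1$-principle: if the parameter curve $B$ (or a component whose nodes already fill $C_{en}$) admits a dominant map from $\mbp^1$, then the node locus is a family of $0$-cycles parametrized by $\mbp^1$; since any two points of $\mbp^1$ are rationally equivalent and intersection respects rational equivalence, all fibers $[x_t]$ coincide in $\ch_0(X)$ and $C_{en}$ is a constant cycle curve. The second is the genus two mechanism itself: there $C_{en}$ is the ramification sextic and is constant cycle through the non-symplectic involution (\Cref{geointer_dbplane}), i.e.\ through $\mathrm{Fix}(\sigma)$ for a non-symplectic $\sigma$, rather than through rationality. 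Thus the problem reduces to producing, for each $g>2$, a codimension one family of $(X,L)$ for which either the parameter curve of nodal elliptic members is rational, or the nodes are forced into the fixed locus of a non-symplectic involution preserving $|L|$ and the family $\{E_t\}$.

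To obtain codimension one I would specialize $(X,L)$ by adjoining to $\mbz L$ a single second class $D$: either a class realizing such a non-symplectic involution with prescribed invariant lattice, or a class (e.g.\ an elliptic pencil $D=F$ with $F^2=0$ and $F\cdot L$ small) chosen so that the Severi-type variety $B$ becomes rationally parametrized, say because the normalized elliptic curves $\tilde E_t$ become isotrivial or commensurable with a fixed pencil. Rank two lattices $\langle L, D\rangle$ of the required shape embed into the K3 lattice under one algebraic condition, cutting out a genuine divisor $\mcf_g^{D}\subset\mcf_g$ that I would check is nonempty for every $g>2$; I would also verify that generic-member smoothness and the Mori--Mukai count survive on $\mcf_g^{D}$, so that $C_{en}$ is genuinely defined there.

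The main obstacle is exactly the structural input on the chosen divisor: establishing either the rationality of the component of $B$ dominating $C_{en}$, or that the nodes lie in $\mathrm{Fix}(\sigma)$. This demands a fairly complete understanding of the variety of $(g-1)$-nodal, geometric genus one curves in $|L|$ and of the node map $B \dashrightarrow X$ --- its irreducibility, its behaviour at the boundary entering the closure in the definition of $C_{en}$, and the assurance that the construction does not secretly force a deeper stratum, so that the locus is honestly codimension one. Controlling these through the adjunction constraint above, and choosing $D$ so that the family of normalized elliptic curves is rationally parametrized (or $\sigma$-symmetric), is where the real work lies; the cycle-theoretic conclusion is then immediate from the $\mbp^1$-principle or from \cite[Prop.~7.1]{Huybrechts2014}.
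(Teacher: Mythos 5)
Your proposal is a strategy outline rather than a proof, and the decisive steps are exactly the ones you defer as ``where the real work lies.'' Your second mechanism does coincide with the paper's actual route: the paper specializes to hyperelliptic K3 surfaces, i.e.\ double covers $\pi \colon X \ra \mbf_n$ ($n=0,1$) branched over $B \in |-2K_{\mbf_n}|$, with $L_r = \pi^{\ast}\mso(e+rf)$; by Reid's classification these form an $18$-dimensional (codimension one) locus $\mcf_{n,r} \subset \mcf_g$ covering all $g \geqslant 3$ as parities alternate between $n=0$ and $n=1$. But you never supply the key geometric identification that makes this work: nodal elliptic curves in $|L_r|$ correspond, under $\pi^{\ast}$, to members of $|e+rf|$ simply tangent to $B$ at exactly $g-1$ points, and the nodes of $\pi^{\ast}l$ sit precisely over the tangency points, hence on the ramification curve $\pi^{-1}(B)$. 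This is what shows $C_{en} = \pi^{-1}(B) = \mathrm{Fix}$ of the covering involution, after which \cite[Prop.~7.1]{Huybrechts2014} applies. Without this step, ``the nodes are forced into the fixed locus of a non-symplectic involution'' is an unverified hope, not a consequence of your lattice condition. A second genuine gap is nonemptiness: you invoke Mori--Mukai for the existence of nodal elliptic members, but that theorem concerns \emph{generic} $(X,L) \in \mcf_g$, and your construction lives on a proper Noether--Lefschetz divisor where it does not apply a priori. The paper has to route around this via \cite[Thm.~3.1]{chengounelasliedtke2022curves} for rank-two lattice polarized K3 surfaces (producing an integral nodal rational curve in $|L_r|$, then smoothing one node to get the one-dimensional family of nodal elliptic curves); your remark that you ``would also verify that the Mori--Mukai count survives'' names the problem without solving it.

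Your first mechanism, the $\mbp^1$-principle, is moreover essentially vacuous in this setting. For $g>2$ each nodal elliptic member has $g-1$ nodes, and the sum $\sum_i [P_i] = (g-1)c_X$ in $\ch_0(X)$ holds \emph{automatically} by the double point formula (this is part of \Cref{easyobserv}), so constancy of the total node cycle as $t$ varies over a rational base gives nothing new; the whole difficulty is the equality $[P_i] = [P_j]$ of individual nodes on the \emph{same} fibre. And if you instead ask for rationality of the curve $B'$ parametrizing individual nodes (after the base change separating the $g-1$ sections), then its image $C_{en}$ would itself be a rational curve --- trivially constant cycle, but a degenerate situation that does not occur here: on the hyperelliptic divisor $C_{en} \simeq B$ is a smooth curve of genus $9$. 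So only your involution branch can succeed, and it is precisely there that the proposal leaves the two load-bearing arguments (the tangency/node correspondence and the existence statement off the generic locus) unproven.
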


	This result  is obtained by studying the curves of elliptic nodes associated with hyperelliptic K3 surfaces.  According to \cite{Reid1976HyperellipticLS}, these special K3 surfaces can always be realized as double covers of Hirzebruch surfaces or $\mbp^2$ and  form an $18$-dimensional locus in each $\mcf_g$ when $g \geqslant 3$, see \Cref{hek3_review} for a brief summary of the classification theory. Similar to the   case $g=2$, the  curve $C_{en}$ associated with a hyperelliptic K3 surface can  be identified with the fixed locus of the hyperelliptic involution (see \Cref{proof}).

	Note that a priori the  curve of elliptic nodes $C_{en}$ can be empty for non-generic polarized K3 surfaces. However, it turns out  that at least for a generic hyperelliptic K3 surface $(X,L)$ of the type that we are concerned about, the construction of $C_{en}$ is valid (see \Cref{nonempty}).

	For generic polarized K3 surfaces of genus $g>2$, \Cref{intro_conj} is still open. The major difficulty is that we lack an alternative geometric interpretation for the distinguished curve $C_{en}$ that would enable us to compare the cycle classes of closed points on $C_{en}$ in $\ch_{0}(X)$, even if the projective model of $X$ is known, e.g.\ when $(X,L)$ is generic of small genus. 
	
	Finally, we point out that based on our knowledge of $\ch_0(X)$,  hyperelliptic K3 surfaces are almost all the examples (in a naive sense) for which we can prove \Cref{intro_conj}. This aspect is  discussed in \Cref{discussion}.
	
	\hfill\\
	\textbf{Acknowledgement.}
	This paper is part of my master thesis at Universität Bonn. I would like to thank my advisor Daniel Huybrechts for suggesting me this interesting topic, inspiring discussions and  helpful comments on improving this paper.
	\hfill\\
	\hfill\\
	\textbf{Notations and Conventions.} We always work over the base field $\mbc$. A nodal ellipic curve in our convention is an integral nodal projective curve of geometric genus one.
	The coarse moduli space of polarized K3 surfaces of genus $g$ is denoted by $\mcf_g$. We say a property holds for a \textit{generic}  polarized K3 surface of genus $g$ if it holds for points $(X,L)$ in some non-empty Zariski open subset of $\mcf_g$. 
		
	\section{Preparations} \label{sec_prep}
	The aim of this section is to motivate the study of \Cref{intro_conj}. To be specific, we give the formal definition of the curve $C_{en}$ of elliptic nodes of a generic polarized K3 surface $(X,L)$ of genus $g \geqslant 2$ and then present Huybrechts' proof of the conjecture for  the simplest case $g = 2$.

	\subsection{Constant cycle curves}
	Let us first review some basic facts about constant cycle curves  following \cite{Huybrechts2014}. 	Throughout this part, $X$ is always a projective K3 surface over $\mbc$.

	\begin{defn}
		A curve $C \subset X$ is a \textit{constant cycle curve} if $[x] = [y]$ in $\ch_0(X)$ for any two  closed points $x, y \in C$.
	\end{defn}

	Recall from \cite{beauville2004chow} that the group $\ch_0(X)$ contains a canonical class $c_X \in \ch_0(X)$, called the \textit{Beauville--Voisin class}, which can be realized by any point on a rational curve $C \subset X$ and is  distinguished in the sense that the image of the intersection product
	$$
	\pic(X) \otimes \pic(X) \ra \ch_0(X)
	$$
	is contained in $\mbz \cdot c_X$.  Then indeed the  cycle class represented by a point of a  constant cycle curve $C \subset X$ is exactly the Beauville--Voisin class $c_X$ (see  \cite[Lem.~2.2]{voisin2015rational}). In addition,  by \cite[Lem.~10.7]{voisin2003bookvolII},  to show that a curve is a constant cycle curve, it suffices to verify that $[x] = [y] \in \ch_0(X)$  for any two points $x,y$ in an open dense subset $U \subset C$.

	The following result  \cite[Prop.~7.1]{Huybrechts2014} provides a method to produce constant cycle curves on K3 surfaces that admit non-symplectic automorphisms.
	
	\begin{prop}
	\label{intro_prop_nonsymplectic}
		Let $f$ be a non-symplectic automorphism of finite order $n$ of a K3 surface $X$. Then any curve $C \subset X$ contained in the fixed locus of $f$ is a constant cycle curve.
	\end{prop}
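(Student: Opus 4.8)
The plan is to turn the two points into a difference of fixed $0$-cycles and annihilate it on the quotient surface $Y:=X/\langle f\rangle$, whose Chow group of degree-zero $0$-cycles vanishes for Kodaira-dimension reasons. Let $G=\langle f\rangle$, let $\pi\colon X\ra Y$ be the quotient, and fix two closed points $x,y\in C$. Since $C$ lies in the fixed locus, $f(x)=x$ and $f(y)=y$, so the degree-zero cycle $z:=[x]-[y]$ satisfies $f_*z=z$; that is, $z$ is $G$-invariant in $\ch_0(X)_{\mathbb{Q}}$. I will show $z=0$. Working rationally is harmless: because $\textup{Alb}(X)=0$, Roitman's theorem gives that $\ch_0(X)_{\deg 0}$ is torsion-free, so $z=0$ in $\ch_0(X)_{\mathbb{Q}}$ forces $z=0$ integrally, whence $[x]=[y]$ and, as $x,y$ are arbitrary, $C$ is a constant cycle curve.

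First I would use Galois descent for Chow groups of quotients: with $\mathbb{Q}$-coefficients one has $\pi^*\colon \ch_0(Y)_{\mathbb{Q}}\xrightarrow{\ \sim\ }\ch_0(X)_{\mathbb{Q}}^{\,G}$ and $\pi_*\pi^*=n\cdot\textup{id}$, where $n=|G|$. As $z$ is $G$-invariant, write $z=\pi^*w$ with $w\in\ch_0(Y)_{\mathbb{Q}}$; then $\pi_*z=nw$. On the other hand $\pi_*z=[\pi(x)]-[\pi(y)]$ has degree zero, so $w$ is a degree-zero class. Thus everything reduces to the vanishing $\ch_0(Y)_{\deg 0,\mathbb{Q}}=0$.

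To obtain this vanishing I would invoke Bloch's conjecture in the (proven) non-general-type case. Let $p\colon\widetilde Y\ra Y$ be a resolution of the quotient singularities, which are rational. Since resolving rational singularities adds no global holomorphic forms, $H^{0}(\widetilde Y,\Omega^{2}_{\widetilde Y})\cong H^{0}(X,\Omega^{2}_X)^{G}$ and $H^{0}(\widetilde Y,\Omega^{1}_{\widetilde Y})\cong H^{0}(X,\Omega^{1}_X)^{G}$. The first is zero because $f^{*}$ acts on the one-dimensional space $H^{0}(X,\Omega^{2}_X)=\mathbb{C}\cdot\sigma$ by a nontrivial root of unity (this is exactly non-symplecticity), and the second is zero because $H^{1,0}(X)=0$; hence $p_g(\widetilde Y)=q(\widetilde Y)=0$. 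Moreover $\widetilde Y$ is not of general type: the finite surjection $\pi$ gives a ramification formula $K_X=\pi^{*}K_Y+R$ with $R\geqslant 0$, so $\kappa(\widetilde Y)=\kappa(Y)\leqslant\kappa(X)=0$. Therefore \cite{Bloch1976ZeroCO} applies and the Albanese map of $\widetilde Y$ is an isomorphism on degree-zero cycles; combined with $q(\widetilde Y)=0$ this yields $\ch_0(\widetilde Y)_{\deg 0}=0$, and since $p_*\colon\ch_0(\widetilde Y)\ra\ch_0(Y)$ is surjective and degree-preserving we get $\ch_0(Y)_{\deg 0}=0$. Plugging back, $w=0$, so $z=\pi^*w=0$, as desired.

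The main obstacle is precisely the step $\ch_0(Y)_{\deg 0,\mathbb{Q}}=0$: it is the only place where input beyond formal cycle calculus enters. The delicate point there is not Bloch's conjecture itself (unconditional for surfaces not of general type) but verifying its hypotheses for the possibly singular quotient $Y$ -- that passing to the minimal resolution changes neither $p_g$, $q$, nor $\kappa$, and that $\kappa(Y)\leqslant 0$, so the general-type case (where Bloch's conjecture is open) is genuinely avoided. The remaining ingredients -- the $G$-invariance of $z$, Galois descent, and torsion-freeness via Roitman -- are routine, though I would still want to double-check the rational-coefficient manoeuvres at the ramification points of $\pi$, where $\pi^{*}$ must be read via $\mathbb{Q}$-factoriality of the quotient singularities.
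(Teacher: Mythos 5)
Your proposal is correct and takes essentially the same route as the paper: the paper does not prove \Cref{intro_prop_nonsymplectic} itself but cites \cite[Prop.~7.1]{Huybrechts2014}, and the remark immediately following it records exactly your key mechanism, namely that $f$ non-symplectic is equivalent to $\ch_0(X/\langle f \rangle) \simeq \mbz$ via \cite{MumfordChowzero} and the Bloch--Kas--Lieberman case of Bloch's conjecture \cite{Bloch1976ZeroCO}, combined with pullback along the quotient and Roitman's torsion-freeness \cite{Rojtman}. Your Galois-descent packaging (descending $[x]-[y]$ to the quotient rather than pulling back $[\pi(x)]-[\pi(y)]$, which equals $n([x]-[y])$ at fixed points) is a cosmetic variant of the standard argument, and your care about rational singularities, $p_g(\widetilde Y)=q(\widetilde Y)=0$, and $\kappa(\widetilde Y)\leqslant 0$ correctly verifies the hypotheses; there is no gap.
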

	
	Observe that by  \cite{MumfordChowzero} and the verified case of Bloch's conjecture in \cite{Bloch1976ZeroCO}, an automorphism $f$ is non-symplectic if and only if the quotient surface $\bar{X} \coloneqq X/\langle f \rangle$ satisfies $\ch_0(\bar{X}) \simeq \mbz$. This  induces the following toy example for the above construction.
	\begin{exmp}[{\cite[Sec.~7.1]{Huybrechts2014}}] \label{exmp_intro}
		Let $X$ be a generic polarized K3 surface of genus two. Then $X$ admits a double cover $\pi \colon X \ra \mbp^2$, ramified over a smooth sextic $B \in |\mso_{\mbp^2}(6)|$. The naturally induced involution $i$ on $X$ fixes the curve $D \coloneqq \pi^{-1}(B)$. Since $\ch_0(\mbp^2) \simeq \mbz$, the remark above implies that $i$ is non-symplectic, and hence $D$ is a constant cycle curve.
	\end{exmp}
	
	\subsection{An alternative description of the fixed curve $D$}
	Note that when $g \geqslant 3$, \Cref{intro_prop_nonsymplectic} merely produces constant cycle curves on special K3 surfaces, i.e.\ those non-generic in the moduli space. However, Huybrechts observed that the fixed curve $D$ has another geometric description that can be generalized to polarized K3 surfaces of higher genus. We present his idea here in details and expect to obtain from it new examples of constant cycle curves on generic polarized K3 surfaces of genus $g \geqslant 3$.

	\begin{lem}[Huybrechts] \label{geointer_dbplane}
		Let $\pi \colon X \ra \mbp^2$ be a double cover  ramified over a smooth sextic $B \in |\mso_{\mbp^2}(6)|$ and let $L = \pi^{\ast}\mso_{\mbp^2}(1)$. Then the constant cycle curve $D \coloneqq \pi^{-1}(B)$ (see \Cref{exmp_intro}) can  be alternatively described as the closure of the following set
		\begin{center}
			\textup{	\{$P \in X \mid P $ is the nodal point of some nodal elliptic curve $E \in |L|$\}.}
		\end{center}
	 Note that in our setting, any nodal elliptic curve in $|L|$ has exactly one node.
	\end{lem}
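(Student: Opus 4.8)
The plan is to describe the linear system $|L|$ explicitly through $\pi$ and then read off which members are nodal elliptic curves together with the location of their nodes. Since $\pi_{\ast}\mso_X \simeq \mso_{\mbp^2}\oplus\mso_{\mbp^2}(-3)$ for a double cover branched over a sextic, the projection formula gives $H^0(X,L)\simeq H^0(\mbp^2,\mso_{\mbp^2}(1))$; hence every member of $|L|$ is the pullback $C_\ell \coloneqq \pi^{-1}(\ell)=\pi^{\ast}\ell$ of a unique line $\ell \subset \mbp^2$, and $C_\ell \ra \ell \cong \mbp^1$ is the double cover branched along the length-six divisor $\ell \cap B$. A first key observation is that the singular locus of any $C_\ell$ is contained in $D$: away from $B$ the map $\pi$ is étale, so $C_\ell$ is étale over the smooth curve $\ell$ outside $D$ and therefore smooth there. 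Thus every node of every member of $|L|$ automatically lies on the candidate curve $D = \pi^{-1}(B)$.

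Next I would carry out the local analysis of the cover at a point of $B$. Working in local coordinates in which $X$ is $w^2 = h(u,v)$ with $B = \{h=0\}$ and $D = \{w = h = 0\}$, the restriction of $h$ to a line $\ell$ through a point $Q \in B$ is a function $\phi(t)$ in a local parameter $t$ on $\ell$ centered at $Q$, and $C_\ell$ is locally $w^2 = \phi(t)$. If $\ell$ meets $B$ transversally at $Q$ then $\phi$ has a simple zero and $C_\ell$ is smooth over $Q$; if $\ell$ is simply tangent to $B$ at $Q$ then $\phi(t) = ct^2 + \cdots$ with $c \neq 0$, so $w^2 = ct^2 + \cdots$ is an ordinary node located at the point of $D$ over $Q$. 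Globally, when $\ell$ is tangent to $B$ at exactly one point $Q$ with contact order two and transverse at the remaining four intersection points, the branch divisor of the normalization $\widetilde{C_\ell} \ra \mbp^1$ is reduced of length four, so $\widetilde{C_\ell}$ has genus one by Riemann--Hurwitz; hence $C_\ell$ is an integral curve of arithmetic genus two with a single node and geometric genus one, i.e.\ a nodal elliptic curve, whose unique node is the point of $D$ lying over $Q$. Conversely, since the singularities of $C_\ell$ arise precisely from the non-reduced part of $\ell \cap B$, a member $C_\ell$ is a nodal elliptic curve exactly when $\ell$ is such a simple tangent line.

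It remains to show that these nodes sweep out a dense subset of $D$. The restriction $\pi|_D \colon D \ra B$ is an isomorphism carrying the node of $C_\ell$ to the tangency point $Q$, so it suffices to verify that the tangency points of simple tangent lines are dense in $B$. This is where the main care is needed: I must rule out that a general tangent line is special, and this relies on the fact that the flexes and the contact points of bitangents of the smooth plane sextic $B$ form only a finite set. A flex tangent (contact order three) would yield $w^2 = t^3 + \cdots$, an $A_2$ cusp rather than a node, and a bitangent would produce a second node and drop the geometric genus to zero; so outside the dense open complement of these finitely many special points the tangent line $\ell_Q$ at $Q$ is a simple tangent, and the corresponding node $(\pi|_D)^{-1}(Q)$ ranges over a dense subset of $D$. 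Taking closures, the set of nodal points of nodal elliptic curves in $|L|$ coincides with $D$, which is exactly the asserted alternative description; in particular every such curve has a single node, as claimed.
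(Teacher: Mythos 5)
Your proof is correct and follows essentially the same route as the paper's: both identify members of $|L|$ with pullbacks of lines, match simple tangent lines of $B$ (tangent at one point, transverse elsewhere) with nodal elliptic curves whose node lies over the tangency point, and conclude by observing that a generic tangent line of $B$ is of this type, so the nodes fill out a cofinite subset of $D \simeq B$ whose closure is $D$. The only difference is one of detail: you make explicit what the paper leaves implicit, namely the computation $\pi_{\ast}\mso_X \simeq \mso_{\mbp^2} \oplus \mso_{\mbp^2}(-3)$ showing every member of $|L|$ is a pulled-back line, the local model $w^2 = \phi(t)$ distinguishing nodes from cusps, the Riemann--Hurwitz genus count, and the finiteness of flexes and bitangents of the smooth sextic justifying the genericity claim.
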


	\begin{proof}[Proof]
	We briefly explain the picture behind the idea. Observe that there exists a bijection between the following two sets 
		\begin{enumerate}[label=(\roman*)]
			\item lines in $|\mso_{\mbp^2}(1)|$ that are simply tangent to $B$ at one point and intersect $B$ transversally at the other intersection points
			\item  nodal elliptic curves in $|L| $
		\end{enumerate} 
		via the pullback $\pi^\ast \colon |\mso_{\mbp^2}(1)| \ra |L|$ of linear systems. Moreover, the tangent point of  a line $l \in |\mso_{\mbp^2}(1)|$ in (i) with the branch locus $B$ is pulled back by $\pi$ to the node of the nodal elliptic curve $\pi^{\ast}l \in |L|$. Since a generic tangent line of $B$ satisfies the intersection condition in (i), the singularities of nodal elliptic curves in $|L|$ go through all but at most finitely many points on the branch locus $B$. We conclude  by taking the closure of this set of nodal points in $X$.
	\end{proof}
	\subsection{The curve of elliptic nodes of $(X,L)$} \label{formaldef}

	 Based on the previous geometrical observation in \Cref{geointer_dbplane} for the fixed curve $D$ (see \Cref{exmp_intro} for the construction), we can now easily generalize the construction of the curve $D$ to any polarized K3 surface $(X,L)$ of genus $g \geqslant 3$.  
	
	Let us first fix some necessary notations.  
	We say a nodal curve $C \subset X$ is \textit{$\delta$-nodal}, if it has exactly $\delta$ nodal singularities and is smooth elsewhere. The locus of integral $\delta$-nodal curves in $|L|$, denoted by $V_{L,\delta}$, is called the \textit{Severi variety} (of $\delta$-nodal curves) of the pair $(X,L)$. Recall from \cite[Ex.~1.3]{SernesiChiantini1996NodalCO} that for any $1 \leqslant \delta \leqslant g$, the Severi variety $V_{L,\delta} \subset |L|$ is a smooth subvariety of  dimension $g-\delta$ once it is nonempty. In particular, nodal elliptic curves in $|L|$ are $(g-1)$-nodal and form a one-dimensional family, once there exists any of them in $|L|$. 
	
	\begin{rmk}
		The nonemptieness of $V_{L,\delta}$ can be ensured by e.g.\ assuming that $(X,L)$ is generic in the moduli space $\mathcal{F}_g$ of polarized K3 surfaces of genus $g$ (see \cite{MoriMukai}). However, for special polarized K3 surfaces in the moduli space $\mcf_{g}$, for example,  certain lattice polarized K3 surface $(X,L)$, the linear system $|L|$ may also contain  an integral nodal rational curve $C$  (see \cite{chengounelasliedtke2022curves}). Consequently, by the deformation theory of nodal curves on K3 surfaces, one can always smooth one node of $C$ and  obtain a one-dimensional family of nodal elliptic curves in $|L|$ as well. We will come back to this point in \Cref{nonempty}.
	\end{rmk}

	\begin{defn} \label{def_Cd}
		Let $(X,L)$ be a polarized K3 surface of genus $g$.  \textit{The curve of elliptic nodes}  $C_{en}$ of $(X,L)$ is defined as the one-dimensional component of the closure of the following set
		\begin{center}
			\{$P \in X \mid P $ is the nodal point of some nodal elliptic curve $E \in |L|$\}
		\end{center}
		with the reduced scheme structure (we allow $C_{en}$ to be empty if there exists no nodal elliptic curves in $|L|$) .
	\end{defn}
	
	\begin{rmk}
		The closure of the above set of singularities has dimension one once the Severi variety $V_{L,g-1}$ is nonempty, but we do not know if the closure is always irreducible or even at least of pure dimension one, since $V_{L,g-1} \subset |L|$ may have base point at one of the nodal points of some nodal elliptic curve in $|L|$. Therefore, we discard  discrete points that possibily exist manually in the definition of $C_{en}$. This is only known to be unnecessary when $(X,L)$ is generic of genus three by \cite{witaszek2014geometry}:  There the curve of elliptic nodes $C_{en}$  of a smooth quartic is called the ``double-cover curve" and its geometry was studied in details. For instance, $C_{en}$ is an irreducible curve of degree $320$, genus $1281$ with only nodal and cusp singularities.
	\end{rmk}
	
	Based on \Cref{exmp_intro} and  \Cref{geointer_dbplane}, we obtain
	\begin{cor}
	The curve of elliptic nodes $C_{en}$ of a generic polarized K3 surface $(X,L)$ of genus two is a constant cycle curve.
	\end{cor}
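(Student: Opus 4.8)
The plan is to identify the curve of elliptic nodes $C_{en}$ with the constant cycle curve $D$ already exhibited in \Cref{exmp_intro}, so that the corollary becomes an immediate consequence of the two preceding results. The only genuine task is to match up the definitions.

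First I would fix the geometry. For a generic polarized K3 surface $(X,L)$ of genus two, \Cref{exmp_intro} supplies the double cover $\pi \colon X \ra \mbp^2$ branched along a smooth sextic $B$, with $L = \pi^\ast \mso_{\mbp^2}(1)$, and tells us that the fixed curve $D \coloneqq \pi^{-1}(B)$ of the covering involution is a constant cycle curve. Here genericity also guarantees, via \cite{MoriMukai}, that $|L|$ contains a one-dimensional family of nodal elliptic curves, so that $C_{en}$ is genuinely nonempty and one-dimensional.

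Next I would record that $D$ is irreducible of pure dimension one. Since $B$ is a smooth, hence connected and irreducible, sextic and $D = \pi^{-1}(B)$ is the ramification divisor of $\pi$, the restriction $\pi|_D \colon D \ra B$ is an isomorphism; thus $D \cong B$ is a smooth irreducible curve. Then \Cref{geointer_dbplane} identifies $D$ with the closure in $X$ of the set of nodal points of the nodal elliptic curves in $|L|$. Comparing this with \Cref{def_Cd}, the curve $C_{en}$ is by definition the one-dimensional component of exactly the same closure; since that closure is already irreducible of pure dimension one, there are no discrete points to discard, and we conclude $C_{en} = D$. Being equal to $D$, the curve $C_{en}$ is a constant cycle curve.

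I do not expect a real obstacle here: all the substance is carried by \Cref{exmp_intro} and \Cref{geointer_dbplane}, and the argument amounts to unwinding \Cref{def_Cd}. The only point requiring a moment's care is verifying that the closure appearing in \Cref{def_Cd} has no spurious zero-dimensional components, which is handled by the isomorphism $D \cong B$.
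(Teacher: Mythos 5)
Your proof is correct and follows exactly the paper's route: the paper derives this corollary immediately by combining \Cref{exmp_intro} (the fixed curve $D=\pi^{-1}(B)$ is a constant cycle curve) with \Cref{geointer_dbplane} (which identifies $D$ with the closure of the locus of elliptic nodes). Your extra remark that $D\cong B$ is irreducible, so no zero-dimensional components need discarding when unwinding \Cref{def_Cd}, is a small but valid tightening of the same argument.
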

	
	We end this section with some useful observations on sufficient conditions for \Cref{intro_conj}, i.e.\ for $C_{en}$ to be a constant cycle curve.
	
	Let $(X,L)$ be a polarized K3 surface of genus $g > 2$ with nonempty Severi variety $V_{L,g-1}$. For a nodal elliptic curve $E \in |L|$, we denote its $(g-1)$ nodal points by $P_1, \ldots, P_{g-1}$ and write $\{P_{i}^{\prime},P_{i}^{\prime \prime}\}$ for the preimage of $P_i$ under the normalization map $\nu \colon \tilde{E} \ra E$. Consider the following two conditions related to the curve $E$: 
	
	\begin{enumerate}[label =(\alph*)]
		\item $[P_i] = [P_j]$ in $\ch_0(X)$ for any $i \neq j$. \label{X_eq}
		\item $[P_{i}^{\prime}]+[P_{i}^{\prime \prime}] = [P_{j}^{\prime}]+[P_{j}^{\prime \prime}]$ in $\ch_0(\tilde{E})$ for any $i \neq j$.  \label{E_eq}
	\end{enumerate}

	\begin{lem} \label{easyobserv}
		Use the setting above. Then
		\begin{enumerate}[label = \normalfont(\roman*)]
			\item For any nodal elliptic curve $E \in |L|$, the condition \textup{\ref{E_eq}} implies \textup{\ref{X_eq}}.
			\item $C_{en}$ is a constant cycle curve if and only if the equation in \textup{\ref{X_eq}} holds for any nodal elliptic curve $E \in |L|$.
		\end{enumerate}
	\end{lem}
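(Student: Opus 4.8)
The plan is to reduce both parts to a single Beauville--Voisin-type identity for the nodes, together with the torsion-freeness of $\ch_0(X)$. Recall that $X$ is a K3 surface, so its Albanese vanishes and, by Roitman's theorem \cite{Rojtman}, the group $\ch_0(X)$ has no torsion in its degree-zero part; I will use this repeatedly to cancel multiplicities. Throughout write $\nu\colon\tilde E\ra E$ for the normalization and $j\colon E\hookrightarrow X$ for the inclusion, so that $\tilde E$ is an elliptic curve and $j\circ\nu\colon\tilde E\ra X$ is a morphism from a smooth genus-one curve. For part (i), observe that $\nu$ is birational and carries each of $P_i',P_i''$ isomorphically onto $P_i$, whence $(j\circ\nu)_\ast\big([P_i']+[P_i'']\big)=2[P_i]$ in $\ch_0(X)$. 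Applying the proper pushforward $(j\circ\nu)_\ast$ to the equality \ref{E_eq} then gives $2[P_i]=2[P_j]$, and dividing by $2$ (legitimate by torsion-freeness) yields \ref{X_eq}.

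The heart of the matter, which also drives part (ii), is the relation
\[
\sum_{i=1}^{g-1}[P_i]=(g-1)\,c_X \quad\text{in }\ch_0(X),
\]
valid for \emph{every} nodal elliptic curve $E\in|L|$. To prove it I would combine adjunction with the conductor formula for the normalization of a nodal curve: since $E\in|L|$ on the K3 surface $X$ one has $\omega_E\cong L|_E$, while $\nu^\ast\omega_E\cong\mso_{\tilde E}\big(\sum_i(P_i'+P_i'')\big)$ because the conductor of each node $P_i$ pulls back to the reduced divisor $P_i'+P_i''$ on $\tilde E$. Comparing the two descriptions shows that on $\tilde E$ the zero-cycle $\sum_i(P_i'+P_i'')$ is rationally equivalent to $(j\circ\nu)^\ast c_1(L)$. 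Pushing this forward along $j\circ\nu$, using the projection formula together with $(j\circ\nu)_\ast[\tilde E]=[E]=c_1(L)\cap[X]$, gives
\[
2\sum_{i=1}^{g-1}[P_i]=c_1(L)^2\cap[X].
\]
By the Beauville--Voisin property recalled above \cite{beauville2004chow}, the right-hand side equals $(L^2)\,c_X=(2g-2)\,c_X$, and dividing by $2$ (torsion-freeness again) produces the claimed identity.

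Part (ii) then follows. The implication from ``$C_{en}$ is a constant cycle curve'' to \ref{X_eq} is immediate: the nodes $P_1,\dots,P_{g-1}$ are points of $C_{en}$, hence all represent $c_X$, so in particular $[P_i]=[P_j]$. Conversely, assume \ref{X_eq} holds for every $E$. Then for each such $E$ all $g-1$ nodes share a common class $v\in\ch_0(X)$, and the identity above gives $(g-1)\,v=(g-1)\,c_X$; dividing by $g-1$ (torsion-freeness once more) shows $v=c_X$. Thus every node of every nodal elliptic curve represents the Beauville--Voisin class. Since by \Cref{def_Cd} these nodes are dense in $C_{en}$, the criterion that a constant cycle curve may be tested on a dense subset shows that $C_{en}$ is a constant cycle curve.

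The only genuinely substantive ingredient is the identity $\sum_i[P_i]=(g-1)\,c_X$; the remainder is bookkeeping with pushforwards. I expect the main obstacle to be the careful identification $\nu^\ast\omega_E\cong\mso_{\tilde E}\big(\sum_i(P_i'+P_i'')\big)$ and the verification that its pushforward computes $c_1(L)^2\cap[X]$, after which Beauville--Voisin and Roitman's torsion-freeness do the rest. It is worth emphasizing that torsion-freeness is invoked three times---to divide by $2$ twice and by $g-1$ once---and that the passage from the weaker cycle relation $\sum_i[P_i]=(g-1)\,c_X$ to the pointwise statement $[P_i]=c_X$ is exactly what condition \ref{X_eq} supplies.
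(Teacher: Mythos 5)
Your proof is correct and structurally identical to the paper's: part (i) is the same pushforward-plus-Roitman argument, and part (ii) rests, exactly as in the paper, on the identity $\sum_{i=1}^{g-1}[P_i]=(g-1)c_X$ in $\ch_0(X)$, the torsion-freeness of $\ch_0(X)$, and the fact that constant-cycle-ness may be tested on a dense open subset of the curve (\cite[Lem.~10.7]{voisin2003bookvolII}). The only divergence is that where the paper obtains the key identity by citing the double point formula \cite[Thm.~9.3]{fulton1998intersection}, you re-derive it by hand via adjunction ($\omega_E\simeq L|_E$ on a K3) and the conductor of the nodes ($\nu^{\ast}\omega_E\simeq\omega_{\tilde E}\bigl(\sum_i(P_i'+P_i'')\bigr)$ with $\omega_{\tilde E}$ trivial), followed by the projection formula and Beauville--Voisin; since $c_1(T_X)$ and $c_1(T_{\tilde E})$ both vanish, this is precisely what the double point formula specializes to in this situation, so your argument is a self-contained expansion of the paper's citation rather than a genuinely different route.
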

	\begin{proof}
		To see (i), we push forward the equation \ref{E_eq} along the morphism $\tilde{E} 
		\xtwoheadrightarrow{\nu} E \hookrightarrow X$ and then apply Roitman's theorem \cite{Rojtman}, i.e.\ $\ch_0(X)$ is torsion free.
		
		For the claim (ii), only  the ``if'' direction needs an explanation. First, for any nodal elliptic curve $E \in |L|$, the double point formula \cite[Thm.~9.3]{fulton1998intersection} and  Roitman's theorem  together induce the equation 
		$
		\sum_{i=1}^{g-1}[P_i] = (g-1)c_X
		$
		in $\ch_0(X)$. If \ref{X_eq} holds for $E$, then it follows immediately from the construction of $C_{en}$  that $[P] = c_X$ in $\ch_0(X)$ for any point $P$ in an open dense subset $U \subset C_{en}$ (here we use again the torsion-freeness of $\ch_0(X)$).  We conclude that  $C_{en}$ is a constant cycle curve by \cite[Lem.~10.7]{voisin2003bookvolII}.
	\end{proof}
	
	\section{The Conjecture for Hyperelliptic K3 Surfaces} \label{sec_conj_hek3}
	The aim of this section is to prove our main result \Cref{mainthm}.
	\subsection{Review on Hyperelliptic K3 Surfaces} \label{hek3_review}
	We collect here some facts in \cite{saintdonat74projmodelofK3} and \cite{Reid1976HyperellipticLS} about hyperelliptic K3 surfaces that will be used later. See also \cite[Sec.~2.3.2]{K3book_Huybrechts2016} for a summary.
	
	\begin{thm}[{\cite[Sec.~4, Cor.~5.8]{saintdonat74projmodelofK3}}] \label{saintdonat}
			Let $X$ be a K3 surface and  $L$ be a line bundle on $X$  with $(L)^2 = 2g-2 > 0$. Suppose the linear system $|L|$ has no fixed components. Then  $|L|$ has no base points and induces a morphism $\phi_L \colon X \ra \mbp^{g}$, which is 		\begin{enumerate}[label = \normalfont(\roman*)]
			\item either a generically $2:1$ morphism,
			\item or a birational morphism 
		\end{enumerate} 
	onto its image. The first case occurs if and only if any smooth irreducible member of $|L|$ is a hyperelliptic curve.
	\end{thm}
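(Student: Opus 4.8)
The plan is to turn $|L|$ into a morphism and then pin down its degree by restricting to the members of $|L|$, which turn out to be canonical curves. First I would record the numerics. Having no fixed components, $L$ meets every irreducible curve nonnegatively and is therefore nef; together with $L^2 = 2g-2 > 0$ this makes $L$ nef and big, so Kawamata--Viehweg vanishing gives $H^1(X,L)=H^2(X,L)=0$, and Riemann--Roch on the K3 surface $X$ yields $h^0(X,L) = 2 + \tfrac12 L^2 = g+1$; thus $\phi_L$ maps to $\mbp^g$. The genuinely technical input, which I would take from Saint-Donat's study of base loci, is that under these hypotheses $|L|$ is base-point-free, so that $\phi_L$ is an honest morphism. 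I expect this to be the main obstacle: one must exclude isolated base points, and the standard route is to show that any such point would force an elliptic pencil $E$ with $L\cdot E\le 1$, which is incompatible with $L^2>0$ and the absence of fixed components.

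Next I would restrict to a smooth irreducible member $C\in|L|$. Twisting the structure sequence of $C$ by $L$ gives
\[
0 \ra \mso_X \ra L \ra L|_C \ra 0,
\]
and $H^1(X,\mso_X)=0$ makes the restriction $H^0(X,L)\to H^0(C,L|_C)$ surjective with one-dimensional kernel. Adjunction gives $L|_C=(K_X+C)|_C=K_C$ since $K_X=\mso_X$, so $\phi_L|_C$ is exactly the full canonical map $\phi_{K_C}\colon C\to\mbp^{g-1}$, where $\mbp^{g-1}$ is the hyperplane cut out by $C$. This is the observation that links the surface map with the classical theory of canonical curves.

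The dichotomy then comes from a degree count, and the characterization from the canonical map together with a deck involution. The image $Y=\phi_L(X)$ is a nondegenerate irreducible surface in $\mbp^g$, hence $\deg Y\ge g-1$; since $\deg\phi_L\cdot\deg Y=L^2=2(g-1)$, we get $\deg\phi_L\le 2$, that is, $\phi_L$ is birational or generically $2:1$ onto $Y$. Because $\phi_L|_C=\phi_{K_C}$ for \emph{every} smooth member $C$ (all of which have $C^2=2g-2>0$ and are therefore not contracted), the two cases read off hyperellipticity. If $\phi_L$ is birational, it is injective on a dense open set, so $\phi_L|_C$ is birational onto its image and $C$ is non-hyperelliptic. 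If $\phi_L$ is generically $2:1$, its deck transformation extends to an automorphism $\iota$ of $X$ (a birational self-map of a K3 surface being biregular) with $\phi_L\circ\iota=\phi_L$, whence $\iota(C)=C$ and $\phi_L|_C$ is genuinely $2:1$, so every smooth member is hyperelliptic. This yields the stated equivalence between case (i) and the hyperellipticity of all smooth irreducible members of $|L|$.
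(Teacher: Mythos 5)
The paper never proves this statement --- it is imported wholesale from Saint-Donat \cite{saintdonat74projmodelofK3} --- so your proposal can only be judged against the classical argument. Your skeleton is the standard modern route (cf.\ \cite[Sec.~2.3]{K3book_Huybrechts2016}): no fixed components gives nefness, Riemann--Roch plus vanishing gives $h^0(X,L)=g+1$, the restriction sequence $0 \ra \mso_X \ra L \ra L|_C \ra 0$ with $H^1(X,\mso_X)=0$ identifies $\phi_L|_C$ with the canonical map $\phi_{K_C}$, and $\deg\phi_L\cdot\deg Y = 2g-2$ together with $\deg Y\geqslant g-1$ for a nondegenerate surface forces $\deg\phi_L\leqslant 2$. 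All of this is correct, as is the deck-involution mechanism (a birational involution of a K3 is biregular, and $\phi_L\circ\iota=\phi_L$ forces $\iota(C)=C$ since $C=\phi_L^{\ast}H$). One caveat: base-point-freeness is itself a \emph{conclusion} of the theorem, so ``taking it from Saint-Donat's study of base loci'' assumes part of what is to be proved; your sketched mechanism (a base point forces an elliptic pencil $E$ with $E\cdot L=1$, which in turn forces a fixed $(-2)$-component) is the right one, but it is the hard lemma of the story, not a routine verification.

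The genuine gap is the uniformity of the dichotomy over \emph{all} smooth members, which is the real content of Cor.~5.8. In the birational case you argue that $\phi_L$ is injective on a dense open set, ``so'' $\phi_L|_C$ is birational onto its image --- but a dense open subset of $X$ need not meet a particular smooth member densely: a special $C$ could a priori be an irreducible component of the double-point locus of $\phi_L$. Degree bookkeeping does not exclude this: if such a $C$ were hyperelliptic, $\phi_L|_C=\phi_{K_C}$ would map $C$ two-to-one onto a rational normal curve $\Gamma$ of degree $g-1$, and $\phi_{L\ast}C = 2\Gamma$ has degree $2g-2$, exactly the degree of a hyperplane section of $Y$, so nothing is contradicted. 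Closing this requires the finer structure that your argument skips: in the birational case $Y$ is normal with only rational double points and $\phi_L$ is an isomorphism off the finitely many contracted $(-2)$-curves, so its restriction to any non-contracted irreducible curve (such as $C$, with $C\cdot L = 2g-2>0$) is automatically birational; alternatively, Saint-Donat's lattice-theoretic characterization of hyperelliptic systems (an elliptic pencil $E$ with $E\cdot L=2$, or $L\simeq\mso_X(2B)$ with $(B)^2=2$, or $g=2$) makes hyperellipticity a property of $L$ rather than of the individual member, which is exactly why ``generically $2:1$'' is equivalent to \emph{every} smooth member being hyperelliptic. A smaller, fixable lacuna sits in the $2:1$ direction: $\iota(C)=C$ does not yet give that $\phi_L|_C$ is genuinely $2:1$; you must rule out $C\subseteq\operatorname{Fix}(\iota)$, e.g.\ by noting that writing $\phi_L=\psi\circ q$ with $q\colon X\ra X/\iota$, the member $C=q^{\ast}D$ is reduced, whereas any divisor $D$ supported on the branch curve pulls back with even multiplicity.
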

	
  \begin{defn} \label{def_HEK3}
  	Use the setting  above. In the first case, such a linear system $|L|$ is called a \textit{hyperelliptic linear system}. We say a K3 surface $X$ is \textit{hyperelliptic} if it admits a hyperelliptic linear system $|L|$, and we will call the pair $(X,L)$  a hyperelliptic K3 surface when the hyperelliptic linear system is assigned. 
  \end{defn}

	We present the  classification theory of hyperelliptic K3 surfaces by following \cite{Reid1976HyperellipticLS}. First, hyperelliptic K3 surfaces are natural generalizations of double planes, i.e.\ K3 surfaces that admit double covers of $\mbp^2$.
	\begin{thm}[\cite{Reid1976HyperellipticLS}]
		A hyperelliptic K3 surface $X$ is always a double cover of  $\mbp^2$ or the $n$-th Hirzebruch surface $\mbf_n = \mbp(\mso_{\mbp^1} \oplus \mso_{\mbp^1}(-n)) $, where $0 \leqslant n \leqslant 4$. 
	\end{thm}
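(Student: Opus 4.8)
The plan is to exploit the hyperelliptic structure through \Cref{saintdonat} and then reduce the classification to the classical theory of surfaces of minimal degree in projective space.

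First I would invoke \Cref{saintdonat}: since $|L|$ is a hyperelliptic linear system, the morphism $\phi_L \colon X \ra \mbp^g$ is generically $2:1$ onto its image $Y \coloneqq \phi_L(X)$, an irreducible nondegenerate surface. Because $\phi_L^\ast \mso_Y(1) = L$ and $\phi_L$ has degree two, the degree of $Y$ in $\mbp^g$ equals $\tfrac{1}{2}(L)^2 = g-1$. As $Y$ is a nondegenerate surface in $\mbp^g$, it has codimension $g-2$, and any such variety satisfies $\deg Y \geqslant (g-2)+1 = g-1$; hence $Y$ is a surface of \emph{minimal degree}.

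Next I would apply the Del Pezzo--Bertini classification of varieties of minimal degree: such a surface $Y$ is either the Veronese surface $v_2(\mbp^2) \subset \mbp^5$, or a (possibly singular) rational normal scroll, the singular case being a cone over a rational normal curve. Passing to the minimal desingularization $\sigma \colon \tilde{Y} \ra Y$, we obtain $\tilde{Y} \simeq \mbp^2$ in the Veronese case and $\tilde{Y} \simeq \mbf_n$ for some $n \geqslant 0$ in the scroll (and cone) case. The covering involution attached to $\phi_L$ then realizes $X$, after resolving, as a double cover $\pi \colon X \ra \tilde{Y}$, which is precisely the asserted double-cover description onto $\mbp^2$ or $\mbf_n$. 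It remains to bound $n$. Since $X$ is a K3 surface, the double cover formula $K_X = \pi^\ast\big(K_{\tilde{Y}} + M\big)$ with branch divisor $B \in |2M|$ forces $M = -K_{\tilde{Y}}$ and $B \in |-2K_{\tilde{Y}}|$. On $\tilde{Y} = \mbf_n$, writing $C_0$ for the section with $C_0^2 = -n$ and $f$ for a fiber, one has $-2K_{\mbf_n} = 4C_0 + (2n+4)f$, so that $B \cdot C_0 = 4 - 2n$. For $n \geqslant 3$ this number is negative, which forces the negative section $C_0$ to be a component of the branch divisor; writing $B = C_0 + R$ with $C_0 \not\subset R$, one computes $R \cdot C_0 = 4 - n$, and the effectivity constraint $R \cdot C_0 \geqslant 0$ yields $n \leqslant 4$. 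For $0 \leqslant n \leqslant 2$ no such obstruction arises, and thus $0 \leqslant n \leqslant 4$.

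The main obstacle I anticipate is the analysis of the singular (cone) members of the minimal-degree list together with the precise behaviour of the branch divisor along the negative section: one must verify that the double cover obtained after desingularization is again a smooth K3 surface, i.e.\ that $B$ acquires at worst rational double points (so that the induced singularities of $X$ resolve to a K3), and that the parity and multiplicity of $C_0$ in $B$ stay compatible with the covering line bundle $M = -K_{\mbf_n}$. Checking that $n \geqslant 5$ is genuinely excluded --- rather than merely producing a non-reduced or non-K3 cover --- is exactly the delicate case analysis carried out in \cite{Reid1976HyperellipticLS}, which I would cite rather than reproduce in full.
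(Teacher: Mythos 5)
The paper itself gives no proof of this statement---it is quoted directly from \cite{Reid1976HyperellipticLS}---so there is no internal argument to compare against; your sketch correctly reconstructs the standard Reid/Saint-Donat proof: $\deg \phi_L(X) = \tfrac{1}{2}(L)^2 = g-1$ exhibits the image as a nondegenerate surface of minimal degree in $\mbp^g$, the Del Pezzo--Bertini classification reduces it to the Veronese surface or a (cone over a) rational normal scroll, and the constraint $B \in |-2K_{\mbf_n}|$ with $B$ reduced gives $B \cdot C_0 = 4-2n$ and then $R \cdot C_0 = 4-n \geqslant 0$, forcing $n \leqslant 4$. The genuinely delicate points---lifting the $2:1$ map across the vertex in the cone case, and verifying that the branch curve has at worst simple singularities so that the resulting double cover is (the minimal resolution of) a smooth K3---are precisely the ones you flag and defer to \cite{Reid1976HyperellipticLS}, which is appropriate here since the paper itself cites rather than proves the result.
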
 
	
	Let $p \colon \mbf_n  \ra \mbp^1$ be the natural projection. Recall that $\pic(\mbf_n) \simeq \mbz e \oplus \mbz f$, where $e$ is the unique section class  representing $\mso_{p}(1)$ and $f$ is the fibre class. The two generators satisfy the  numerical relations
	\begin{equation}\label{eq_basis_relation}
		e^2 = -n, \ \ ef = 1, \ \ f^2 = 0.
	\end{equation}
	Moreover, for an integer $r$, the line bundle $\mso(e+rf)$ on $\mbf_n$ is ample if and only if $r>n$, see e.g.\ \cite[Ch.V, Cor~2.18]{hartshorne1977algebraic}. 
	Then conversely, one can construct hyperelliptic K3 surfaces as follows. 
	
	\begin{prop} \label{dbcover_is_HEK3}
		Fix an integer $n \in \{0,1,2,3,4\}$. Let $B \in |-2K_{\mbf_n}|$ be a smooth curve and   $\pi \colon X \ra \mbf_n$ be the double cover ramified over $B$. Consider the line bundle  $L_{r} \coloneqq \pi^{\ast}\mso(e+rf)$ on $X$ for an integer  $r > n$. Then $(X,L_r)$ is a polarized K3 surface of genus $g = 2r+1-n$. In addition, $(X,L_{r})$ is hyperelliptic. 
	\end{prop}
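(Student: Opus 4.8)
The plan is to verify the four assertions in turn: that $X$ is a smooth projective surface with trivial canonical class and vanishing irregularity (hence a K3 surface), that $L_r$ is a polarization, that $L_r^2 = 2(2r-n)$ (which yields the genus), and finally that $(X,L_r)$ is hyperelliptic by appealing to \Cref{saintdonat}. I would begin by recording the standard structure of a double cover branched over a smooth divisor. Writing $H \coloneqq -K_{\mbf_n}$, so that $B \in |2H|$, the total space $X$ is smooth because $B$ is, and $\pi_{\ast}\mso_X \simeq \mso_{\mbf_n} \oplus \mso_{\mbf_n}(-H)$. The ramification formula gives $K_X = \pi^{\ast}(K_{\mbf_n} + H) = \pi^{\ast}\mso_{\mbf_n} = \mso_X$. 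For the irregularity I would compute $h^i(\mso_X) = h^i(\mbf_n, \mso_{\mbf_n}) + h^i(\mbf_n, \mso_{\mbf_n}(K_{\mbf_n}))$ from the pushforward: both summands vanish for $i=1$ (as $\mbf_n$ is rational, $h^1(\mso_{\mbf_n}) = 0$, and $h^1(\omega_{\mbf_n}) = h^1(\mso_{\mbf_n})$ by Serre duality), while for $i=0$ one gets $h^0(\mso_X)=1$, so $X$ is connected. A smooth connected projective surface with trivial canonical bundle and $q=0$ is a K3 surface.

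For the polarization and the genus I would use that $\mso(e+rf)$ is ample on $\mbf_n$ for $r>n$ (as recalled in the excerpt) together with the fact that the pullback of an ample line bundle along the finite morphism $\pi$ stays ample; hence $L_r$ is a polarization. Using $\deg \pi = 2$ and the relations \eqref{eq_basis_relation}, I compute $L_r^2 = 2(e+rf)^2 = 2(-n + 2r + 0) = 2(2r-n)$, so from $L_r^2 = 2g-2$ I read off $g = 2r + 1 - n$.

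The heart of the matter, and the step requiring the most care, is hyperellipticity. The strategy is to show that $\phi_{L_r}$ factors through $\pi$ and is therefore generically $2:1$, placing us in case (i) of \Cref{saintdonat} and hence in the situation of \Cref{def_HEK3}. It suffices to prove that every section of $L_r$ is pulled back from $\mbf_n$, i.e.\ that $\pi^{\ast}\colon H^0(\mbf_n, \mso(e+rf)) \to H^0(X, L_r)$ is an isomorphism. This map is injective, and by the projection formula the anti-invariant part of $H^0(X, L_r)$ is $H^0(\mbf_n, \mso(e+rf) \otimes \mso(-H)) = H^0(\mbf_n, \mso(-e + (r-n-2)f))$, which vanishes since the coefficient of $e$ is negative; equivalently, Riemann--Roch on the K3 surface gives $h^0(L_r) = g+1$, which matches the value $h^0(\mbf_n, \mso(e+rf)) = 2r-n+2$ computed directly on $\mbf_n$. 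Consequently $|L_r| = \pi^{\ast}|\mso(e+rf)|$ and $\phi_{L_r} = \phi_{\mso(e+rf)} \circ \pi$. Since $\mso(e+rf)$ is birational onto its image for $r>n$ (indeed very ample), while $\pi$ has degree two, the morphism $\phi_{L_r}$ is generically $2:1$ onto its image, and \Cref{saintdonat} then shows that $|L_r|$ is hyperelliptic. The only genuine subtlety is the equality $|L_r| = \pi^{\ast}|\mso(e+rf)|$, which is precisely what rules out $\phi_{L_r}$ being defined by a strictly larger linear system; once this is in place the remaining verifications are routine.
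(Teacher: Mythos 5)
The paper states this proposition without proof, presenting it as part of its review of Reid's classification of hyperelliptic linear systems (citing \cite{Reid1976HyperellipticLS}), so there is no in-paper argument to compare against; your proof is correct and is essentially the standard one from that literature. All steps check out: the double-cover formulas give $K_X = \mso_X$ and $q(X)=0$, the intersection computation $L_r^2 = 2(2r-n)$ gives the genus, and the key identification $|L_r| = \pi^{\ast}|\mso(e+rf)|$ (via vanishing of $H^0(\mbf_n, \mso(-e+(r-n-2)f))$, confirmed by the Riemann--Roch count $h^0(L_r) = g+1 = 2r-n+2$) correctly forces $\phi_{L_r}$ to factor through $\pi$ and be generically $2:1$, placing $(X,L_r)$ in case (i) of \Cref{saintdonat}.
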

	
	The cases for $n=0$ and $1$ are especially important for us, since these two types of hyperelliptic K3 surfaces appear as a divisor in every moduli space $\mcf_{g}$ of polarized K3 surface with odd or even genus $g$ respectively when $g \geqslant 3$. 
	
	\begin{prop}[{\cite{Reid1976HyperellipticLS}}] \label{moduli_dim_heK3}
		For  $n \in \{0,1,2,3,4\}$ and $r>n$, write  $\mathcal{F}_{n,r}$  for the locus of pairs $(X,L_r)$ in the moduli space $\mcf_{g}$ of polarized K3 surfaces  of genus $g = 2r+1-n$. Then for $n = 0$ or $1$, one has
		$$
		\dim \mcf_{n,r} = \dim |-2K_{\mbf_{n}}| - \dim \textup{Aut}(\mbf_{n}) = 18.
		$$
	\end{prop}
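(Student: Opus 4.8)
The plan is to present $\mcf_{n,r}$ as the image of the linear system $|-2K_{\mbf_n}|$ under the double-cover construction, to show that the fibers of this map are the $\aut(\mbf_n)$-orbits, and then to compute $\dim|-2K_{\mbf_n}|$ and $\dim\aut(\mbf_n)$ separately. First I set up the parametrization. By \Cref{dbcover_is_HEK3}, every smooth curve $B \in |-2K_{\mbf_n}|$ gives rise to a hyperelliptic K3 surface $(X_B, L_r)$; moreover, since $\pic(\mbf_n)$ is torsion-free, the class $\mso(B) = -2K_{\mbf_n}$ admits the unique square root $-K_{\mbf_n}$, so the double cover $\pi \colon X_B \ra \mbf_n$, hence $X_B$ itself, is canonically determined by $B$. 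Letting $U \subset |-2K_{\mbf_n}|$ be the nonempty open locus of smooth members, we thus obtain a surjective map $\rho \colon U \ra \mcf_{n,r}$, $B \mapsto (X_B, L_r)$.

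I would then identify the fibers of $\rho$ with the $\aut(\mbf_n)^{\circ}$-orbits on $U$; here it suffices to work with the identity component, since discrete automorphisms do not affect any dimension count. One inclusion is clear: if $B' = g\cdot B$ for $g \in \aut(\mbf_n)^{\circ}$, then $g$ preserves $e$ and $f$, hence $\mso(e+rf)$, and so lifts to an isomorphism $(X_{B'}, L_r) \cong (X_B, L_r)$, giving $\rho(B') = \rho(B)$. For the reverse inclusion, the point is that the deck involution $\sigma$ of the generically $2:1$ morphism $\phi_{L_r}$ attached to $(X_B, L_r)$ (see \Cref{saintdonat}) is intrinsic to the polarized surface; hence the quotient $X_B/\langle\sigma\rangle \cong \mbf_n$ together with the branch curve $B$ is recovered from $(X_B, L_r)$ up to the choice of an isomorphism $X_B/\langle\sigma\rangle \cong \mbf_n$, that is, up to $\aut(\mbf_n)$. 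Therefore the fiber of $\rho$ through a generic $B$ is the orbit $\aut(\mbf_n)^{\circ}\cdot B$, and the fiber-dimension theorem gives
\[
\dim \mcf_{n,r} = \dim U - \dim\bigl(\aut(\mbf_n)^{\circ}\cdot B\bigr) = \dim|-2K_{\mbf_n}| - \dim\aut(\mbf_n) + \dim\mathrm{Stab}_{\aut(\mbf_n)^{\circ}}(B).
\]
Thus the first asserted equality reduces to showing that a generic $B \in U$ has $0$-dimensional stabilizer.

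It remains to compute the two dimensions. For $n \in \{0,1\}$, writing $K_{\mbf_n} = -2e - (n+2)f$, both $-2K_{\mbf_n} = 4e + (2n+4)f$ and $-3K_{\mbf_n}$ are ample (by the criterion that $\mso(ae+bf)$ is ample precisely when $a > 0$ and $b > an$), so $H^i(\mbf_n, \mso(-2K_{\mbf_n})) = 0$ for $i > 0$ by Kodaira vanishing. Riemann--Roch on $\mbf_n$, with $\chi(\mso_{\mbf_n}) = 1$ and $K_{\mbf_n}^2 = 8$, then gives
\[
h^0\bigl(\mso(-2K_{\mbf_n})\bigr) = 1 + \frac{1}{2}(-2K_{\mbf_n})\cdot(-3K_{\mbf_n}) = 1 + 3K_{\mbf_n}^2 = 1 + 24 = 25,
\]
so $\dim|-2K_{\mbf_n}| = 24$; this matches the direct counts $h^0(\mbp^1\times\mbp^1, \mso(4,4)) = 25$ for $n=0$ and $h^0(\mbp^2, \mso(6)) - 3 = 25$ for $n=1$ (imposing a double point at the blown-up point). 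On the other hand $\aut(\mbf_0)^{\circ} \cong \mathrm{PGL}_2 \times \mathrm{PGL}_2$ and $\aut(\mbf_1)^{\circ}$ is the stabilizer of a point in $\mathrm{PGL}_3$, so $\dim\aut(\mbf_n) = 6$ in both cases. Granting finiteness of the generic stabilizer, this yields $\dim\mcf_{n,r} = 24 - 6 = 18$.

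The main obstacle is precisely that last genericity statement: a generic smooth $B \in |-2K_{\mbf_n}|$ is preserved by no positive-dimensional subgroup of $\aut(\mbf_n)$. I would handle it by upper semicontinuity of the stabilizer dimension along $U$: it suffices to exhibit a single smooth $B_0 \in |-2K_{\mbf_n}|$ whose stabilizer is finite, for then $\dim\mathrm{Stab}(B) = 0$ on a dense open subset of $U$. Such a $B_0$ can be produced by hand, or one argues infinitesimally, since a nonzero global vector field on $\mbf_n$ tangent to $B_0$ along $B_0$ would force $B_0$ to be invariant under the corresponding one-parameter subgroup, which a sufficiently general curve in the $24$-dimensional system does not admit. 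This is a routine but slightly delicate point, and it is the only step requiring more than bookkeeping.
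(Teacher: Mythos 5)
Your proposal is correct, and it cannot diverge from ``the paper's own proof'' because the paper gives none: the proposition is stated with a bare citation to Reid, and the displayed formula $\dim|-2K_{\mbf_n}|-\dim\aut(\mbf_n)$ is precisely the orbit-space count you carry out. Your numerics check: $K_{\mbf_n}=-2e-(n+2)f$, $K_{\mbf_n}^2=8$, Riemann--Roch with Kodaira vanishing gives $h^0(-2K_{\mbf_n})=1+3K_{\mbf_n}^2=25$, and $\dim\aut(\mbf_n)^{\circ}=6$ for $n=0,1$; the identification of fibers with orbits via the intrinsic deck involution of $\phi_{L_r}$ (using uniqueness of the square root of $\mso(B)$ in the torsion-free $\pic(\mbf_n)$, and that $|e+rf|$ embeds $\mbf_n$ as a scroll for $r>n$, so Stein factorization recovers $\pi$) is the standard argument and is sound. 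The one step you flag as delicate --- finiteness of the generic stabilizer --- admits a cleaner treatment than your vector-field argument: by adjunction $2g(B)-2=(-2K_{\mbf_n})\cdot(-K_{\mbf_n})=2K_{\mbf_n}^2=16$, so $B$ is a smooth curve of genus $9$ (as the paper notes in \Cref{ccc_proof_F0}), hence $\aut(B)$ is finite; the restriction $\mathrm{Stab}_{\aut(\mbf_n)}(B)\ra\aut(B)$ has trivial kernel, since an automorphism of $\mbf_n$ fixing $B$ pointwise fixes the linear span of $B$ under the embedding by the ample class $-2K_{\mbf_n}$ (for $n=1$ one can see it even more directly in $\mbp^2$), forcing it to be the identity --- so every stabilizer, not just the generic one, is finite, and no semicontinuity or exhibition of a special $B_0$ is needed.
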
 In conclusion, hyperelliptic K3 surfaces that are  double covers of $\mbf_0$ (resp.\ $\mbf_1$) form a locus of codimension one in the moduli space of polarized K3 surfaces of genus $g = 2r+1 \geqslant 3$  (resp.\ $g=2r \geqslant 4$).

	\subsection{Nonemptieness of the curve $C_{en}$} \label{nonempty}
	For a generic polarized K3 surface $(X,L)$, it is known by \cite{MoriMukai} that $|L|$ always contains an integral nodal rational curve, hence $V_{L,g-1}$ is nonempty as well and the construction of the curve of elliptic nodes $C_{en}$ (see \Cref{def_Cd}) is valid. Although hyperelliptic K3 surfaces are not generic in the moduli space $\mcf_g$ and hence a priori the curve $C_{en}$ might be possibly empty,  the recent work  \cite{chengounelasliedtke2022curves} by Chen, Gounelas and Liedtke   concerning the existence of rational curves on lattice polarized K3 surfaces says that this is not the case (in the generic sense).

	Before going to the proof for the non-emptiness of the curve of elliptic nodes $C_{en}$ on hyperelliptic K3 surfaces, we fix some relevant conventions and review some results on lattice polarized K3 surfaces.
	
	\begin{defn}
		Let $\Lambda$ be a lattice, i.e.\ a free $\mbz$-module of finite rank equipped with a symmetric bilinear form. An \textit{ample $\Lambda$-polarized} K3 surface is a pair $(X, j)$, where $X$ is a K3 surface and $j \colon \Lambda \hookrightarrow \pic(X)$ is a primitive embedding such that $j(\Lambda)$ contains an ample line bundle.
	\end{defn}

	From now on, let $\Lambda_{g,k}$ be the lattice of rank two with intersection matrix
	\begin{equation} \label{special_lattice}
		\begin{bmatrix}
			2g-2 & 2k \\
			2k & 0
		\end{bmatrix},
	\end{equation}
	where $g > 1$ and $k > 0$. Then by \cite{Dolgachev1995MirrorSF} the moduli space $M_{\Lambda_{g,k}}$ of (ample)  $\Lambda_{g,k}$-polarized K3 surfaces is a quasi-projective variety of dimension 18. 
	
	\begin{rmk}
		Observe that the Picard lattice of a hyperelliptic K3 surface $(X,L_r)$ in $\mcf_{n,r}$ always contains  a lattice as above. Recall that by construction, $X$ admits a double cover $\pi \colon X \ra \mbf_{n}$ and $L_{r} = \pi^{\ast}\mso(e+rf)$, where $e,f$ are the standard generators of $\pic({\mbf_n})$ with relations in  (\ref{eq_basis_relation}). Let $M \coloneqq \pi^{\ast} \mso(f)$. Then the sublattice $\left\langle L_r, M \right\rangle \subset \pic(X)$ has intersection matrix  
		\begin{equation*} \Lambda_{g,1} = 
			\begin{bmatrix}
				4r-2n & 2 \\
				2 & 0
			\end{bmatrix},
		\end{equation*}
	where $g = 2r-n+1$ is the genus of the polarized K3 surface $(X,L_r)$.
	In particular, if $(X,L_r) \in \mcf_{n,r}$ is very general, then $\pic(X) \simeq \Lambda_{g,1}$.
	\end{rmk}
	The following result generalizes the existence result of integral nodal rational curves on a generic polarized K3 surface $(X,L) \in \mcf_g$ to a generic $\Lambda_{g,1}$-polarized K3 surface.
	\begin{thm}[{\cite[Thm.~3.1]{chengounelasliedtke2022curves}}] \label{lattice_polarized_rational_existence}
		Let $\Lambda$ be a lattice of rank two with intersection matrix as in \eqref{special_lattice} and suppose $L \in \Lambda$ is big and nef on a generic K3 surface $X \in M_{\Lambda}$. Then there exists an open dense subset $U \subset M_{\Lambda}$ such that for any $X \in U$, the linear system $|L|$ contains an integral nodal rational curve.
	\end{thm}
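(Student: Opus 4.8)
The plan is to exploit the isotropic vector in $\Lambda$. Since the intersection matrix \eqref{special_lattice} has determinant $-4k^2 < 0$, the lattice has signature $(1,1)$, and the second basis vector $M$ satisfies $M^2 = 0$ while $L \cdot M = 2k > 0$. After replacing $M$ by the associated primitive nef isotropic class, on a very general $X \in M_{\Lambda}$ this class defines an elliptic fibration $f \colon X \ra \mbp^1$, on whose fibres $F$ the polarization restricts with degree $L \cdot F = 2k$; an integral rational curve in $|L|$ is therefore a rational multisection of $f$ of degree $2k$. I would first emphasize that the problem is one of \emph{existence with good singularities} rather than of navigating a positive-dimensional family: a rational curve in $|L|$ is $g$-nodal, so by the dimension formula for Severi varieties recalled above (\cite{SernesiChiantini1996NodalCO}) the locus $V_{L,g}$ of such curves has expected dimension $g - g = 0$. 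Hence on each surface the rational curves in $|L|$ are rigid, and the content of the theorem is that this finite set is nonempty over an open dense subset of $M_{\Lambda}$.

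Next I would produce one rational curve on a conveniently chosen special member. Staying \emph{inside} $M_{\Lambda}$, I would specialize to a surface $X_0$ on which the Picard lattice jumps so that the elliptic fibration $f_0$ acquires a section $\Sigma$ together with sufficiently many irreducible nodal fibres, i.e. a Bryan--Leung type configuration. Expressing the class $L$ numerically in terms of the fibre class $F = M$ and the section class $\Sigma$, one glues a suitable multisection with singular fibre components into a connected reduced curve $C_0 \in |L|$ of geometric genus zero having only nodes; the existence of such a rational member is the geometric input underlying the Bryan--Leung enumeration of rational curves on elliptic K3 surfaces, in the spirit of the Bogomolov--Mumford theorem. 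The outcome of this step is a single integral nodal rational curve $C_0 \in |L|$ on the point $X_0 \in M_{\Lambda}$.

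The heart of the argument is then to spread $C_0$ out over the moduli space. Consider the total space $\mathcal{V} \to M_{\Lambda}$ whose fibre over $X$ is the Severi variety $V_{L,g}(X)$ of integral $g$-nodal (hence rational) curves in $|L|$. By the Sernesi--Chiantini smoothness each nonempty fibre is $0$-dimensional, so $\dim \mathcal{V} \leqslant \dim M_{\Lambda} = 18$ and the forgetful map $\mathcal{V} \to M_{\Lambda}$ is quasi-finite; it suffices to show it is dominant. For this I would verify that at $(X_0, C_0)$ its differential is surjective. Using the deformation theory of nodal curves on K3 surfaces, the pair $(X_0, C_0)$ deforms to first order along a direction $\xi \in H^1(X_0, T_{X_0})$ preserving $\Lambda$ precisely when $\xi$ is compatible with keeping all $g$ nodes, and surjectivity onto the $18$-dimensional space of $\Lambda$-polarized deformations amounts to the $g$ nodes imposing independent conditions on these deformations. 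Granting this, $\mathcal{V} \to M_{\Lambda}$ is dominant, and I would take $U \subset M_{\Lambda}$ to be the open dense locus over which a nonempty $0$-dimensional fibre of integral $g$-nodal curves exists.

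The step I expect to be the main obstacle is exactly this transversality claim: that the $g$ nodes of $C_0$ impose independent conditions on the $\Lambda$-polarized deformations of $X_0$, equivalently that $(X_0, C_0)$ is an unobstructed point of $\mathcal{V}$ lying on a component dominating $M_{\Lambda}$. Exhibiting one rational curve on a special surface is comparatively cheap, but guaranteeing that it persists as an \emph{integral} and \emph{nodal} curve on the generic $\Lambda$-polarized K3 — that no nodes collide or smooth out and that the curve does not break into components, where the primitivity of $L$ in $\Lambda$ helps to exclude multiple structures — is where the specific shape of $\Lambda$, namely the presence of the isotropic class $M$ with $L \cdot M = 2k$, and the genericity hypothesis on $X$ must be used in an essential way.
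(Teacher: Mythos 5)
This statement is not proved in the paper at all: it is quoted, with attribution, as Theorem~3.1 of Chen--Gounelas--Liedtke \cite{chengounelasliedtke2022curves}, and the paper uses it as a black box to deduce \Cref{nonemptiness}. So there is no internal proof to compare your attempt against; any assessment has to be of your sketch on its own merits, measured against the actual argument in the cited source.

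On those terms, your outline has the right broad shape (specialize inside $M_{\Lambda}$ to an elliptic K3 with larger Picard lattice, produce a rational member, spread out), but both pivotal steps are missing, not just the one you flag. First, the Bryan--Leung configurations you invoke are \emph{reducible} curves --- a section plus chains of fibre components, possibly with multiplicities --- so ``gluing'' them does not yield an integral nodal rational curve $C_0 \in |L|$ on any member of $M_{\Lambda}$; converting such a limit configuration into an integral curve with only nodes on the nearby generic surface is precisely the content of Chen-type theorems and of the degeneration analysis in \cite{chengounelasliedtke2022curves}, where one must rule out worse-than-nodal singularities (cusps, tacnodes) and non-reduced or broken limits. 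Note also that $L \cdot M = 2k$ makes $L$ a $2k$-section class, whereas the Bryan--Leung bookkeeping is set up for section-type classes $\Sigma + df$, so even the construction of the special member does not directly apply when $k > 1$. Second, your transversality mechanism (``the $g$ nodes impose independent conditions on $\Lambda$-polarized deformations'') is not how the spreading out works and is not well posed: an integral rational curve already has minimal geometric genus, so its nodes cannot be smoothed while the curve stays rational, and there is no equisingularity condition to impose. The standard device is instead the deformation theory of genus-zero stable maps with the reduced obstruction theory: since $L$ stays algebraic over all of $M_{\Lambda}$, every component of the relative space of genus-zero stable maps in class $L$ has dimension at least $\dim M_{\Lambda}$ and hence dominates, and the genuine difficulty is then to exclude, at the generic point, degenerate members --- images decomposing as $aL' + bM$ in the rank-two lattice, multiple covers, non-nodal singularities. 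You also silently assume that an integral rational curve in $|L|$ is automatically $g$-nodal, which is itself part of what must be proven. As it stands, the proposal is a plausible roadmap whose essential mathematical content is deferred.
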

	It follows from the deformation theory of nodal curves on K3 surfaces (see \cite{MoriMukai} and also \cite[Ch.~13]{K3book_Huybrechts2016}) that one can smooth  any nodal point of such a nodal rational curve and obtain a one-dimensional family of nodal elliptic curves, hence we obtain directly the following result:
	
	\begin{cor} \label{nonemptiness}
		Let $n = 0$ or $1$, $r>n$ and $g = 2r+1-n$. Then for a generic hyperelliptic K3 surface $(X,L_r)$ in the $18$-dimensional locus  $\mcf_{n,r} \subset \mcf_{g}$ (see \Cref{moduli_dim_heK3}), the curve of elliptic nodes $C_{en}$ of $(X,L_{r})$ is nonempty.
	\end{cor}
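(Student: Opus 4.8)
The plan is to exhibit, for a generic $(X,L_r) \in \mcf_{n,r}$, an integral nodal rational curve in $|L_r|$ and then smooth one of its nodes, producing the one-parameter family of nodal elliptic curves whose nodes sweep out $C_{en}$. By \Cref{def_Cd}, together with the observation (recorded in the remark following it) that a nonempty Severi variety $V_{L_r,g-1}$ already forces the locus of elliptic nodes to be one-dimensional, it suffices to establish that $V_{L_r,g-1} \neq \emptyset$.

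The main step is to deduce the existence of the nodal rational curve from \Cref{lattice_polarized_rational_existence}. First I would note that a very general $(X,L_r) \in \mcf_{n,r}$ satisfies $\pic(X) \simeq \Lambda_{g,1}$, so that the embedding $j \colon \Lambda_{g,1} \hookrightarrow \pic(X)$ sending the standard generators to $L_r$ and $M = \pi^{\ast}\mso(f)$ realizes $X$ as an ample $\Lambda_{g,1}$-polarized K3 surface. This defines a rational map $\mcf_{n,r} \dashrightarrow M_{\Lambda_{g,1}}$. Because the hyperelliptic involution is canonically recovered as the deck transformation of $\phi_{L_r}$, this map is generically injective; as $\mcf_{n,r}$ is irreducible of dimension $18$ (\Cref{moduli_dim_heK3}) and $M_{\Lambda_{g,1}}$ has the same dimension, the image is dense. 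Hence a generic member of $\mcf_{n,r}$ is a generic $\Lambda_{g,1}$-polarized K3 surface.

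I would then verify the hypotheses of \Cref{lattice_polarized_rational_existence} with $\Lambda = \Lambda_{g,1}$ and $L = L_r$: since $L_r = \pi^{\ast}\mso(e+rf)$ is the pullback of the ample bundle $\mso(e+rf)$ (recall $r>n$) along the finite double cover $\pi$, it is itself ample on our hyperelliptic models, hence big and nef on the generic $\Lambda_{g,1}$-polarized K3 surface. The theorem supplies an open dense $U \subset M_{\Lambda_{g,1}}$ over which $|L_r|$ contains an integral nodal rational curve $R$; transporting $U$ through the dominant map above yields such an $R$ for a generic $(X,L_r) \in \mcf_{n,r}$. Finally, $R$ has arithmetic genus $g$ by adjunction (as $R^2 = L_r^2 = 2g-2$) and geometric genus $0$, hence exactly $g$ nodes; the deformation theory of nodal curves on K3 surfaces (\cite{MoriMukai}, \cite[Ch.~13]{K3book_Huybrechts2016}) allows one to smooth a single node, giving a one-dimensional family of integral nodal elliptic curves in $|L_r|$. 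Thus $V_{L_r,g-1} \neq \emptyset$ and $C_{en}$ is nonempty.

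The step I expect to be the crux is the comparison of the two $18$-dimensional moduli spaces: one must ensure that the open-dense condition furnished by \Cref{lattice_polarized_rational_existence} for a generic $\Lambda_{g,1}$-polarized K3 surface genuinely pulls back to an open-dense condition on $\mcf_{n,r}$. Beyond matching dimensions, this requires the density of the image of the forgetful map, for which I would invoke the Torelli theorem together with the intrinsic characterization of the hyperelliptic double-cover structure, so that the map is generically injective onto a dense subset of $M_{\Lambda_{g,1}}$.
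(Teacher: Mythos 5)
Your proposal is correct and follows essentially the same route as the paper: identify a generic member of $\mcf_{n,r}$ as a generic ample $\Lambda_{g,1}$-polarized K3 surface via the sublattice $\langle L_r, M\rangle$ with $M = \pi^{\ast}\mso(f)$, apply \Cref{lattice_polarized_rational_existence} to produce an integral nodal rational curve in $|L_r|$, and smooth one node by the deformation theory of \cite{MoriMukai} and \cite[Ch.~13]{K3book_Huybrechts2016} to get $V_{L_r,g-1} \neq \emptyset$. The only difference is that you spell out the dominance of the comparison map $\mcf_{n,r} \dashrightarrow M_{\Lambda_{g,1}}$ (via generic injectivity and the equality of dimensions), a step the paper leaves implicit after its remark that a very general $(X,L_r) \in \mcf_{n,r}$ has $\pic(X) \simeq \Lambda_{g,1}$.
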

	\subsection{Proof of  \Cref{intro_conj} for generic hyperelliptic K3} \label{proof}
	Use the notations in \Cref{dbcover_is_HEK3} and  \ref{moduli_dim_heK3}. Let $(X,L_r) \in \mcf_{n,r}$ be a generic hyperelliptic K3 surface admitting a double cover $\pi \colon X \ra \mbf_n$ with branch locus $B \in |-2K_{\mbf_n}|$, where $n = 0$ or $1$ and $r > n$. In addition, one can assume without loss of generality that $B$ is irreducible, since this is the generic case.
	
	We have just seen from \Cref{nonemptiness} that in our setting, the curve of elliptic nodes $C_{en}$ of $(X,L_{r})$ is nonempty. Now we show that $C_{en} \subset X$ is a constant cycle curve. 
	Similar to the double plane case in \Cref{geointer_dbplane}, one has a bijection between the following two sets 
	\begin{enumerate}[label=(\roman*)]
		\item lines in $|e+rf|$ that are simply tangent to $B$ at $(g-1)$ points and intersect $B$ transversally at the remaining intersection points
		\item  nodal elliptic curves in $|L_{r}|$
	\end{enumerate} 
	via the pullback $\pi^\ast \colon |e+rf| \ra |L_{r}|$ of linear systems. Moreover, the $(g-1)$ tangent points of such a line $l$ with $B$ are pulled back to the $(g-1)$ nodal points of $\pi^{\ast}l$, which  belong to the ramification locus $\pi^{-1}(B)$. Therefore, the curve  of elliptic nodes $C_{en}$ of  $(X,L_r)$ is exactly the curve $\pi^{-1}(B)$. Note that  $\pi^{-1}(B) \simeq B$ is a smooth, irreducible curve of genus $9$.
	
	Finally, since $\mbf_{n}$ is rational, for the same reason as in \Cref{exmp_intro}, the involution on $X$ induced by the double cover $\pi$ is non-symplectic with fixed locus $\pi^{-1}(B) = C_{en}$, hence $C_{en}$ is a constant cycle curve. We summarize the result below. 
	\begin{prop} \label{ccc_proof_F0}
		Let $(X,L_r)$ be a generic polarized K3 surface of genus $g = 2r-n+1$ in $\mcf_{n,r}$, where $n \in \{0,1\}$ and $r > n$. Let $\pi \colon X \ra \mbf_n$ be the double cover structure. Then the curve of elliptic nodes $C_{en}$ of $(X,L_r)$ is exactly the ramification locus of $\pi$. In particular, $C_{en}$ is a smooth constant cycle curve of genus $9$. 
	\end{prop}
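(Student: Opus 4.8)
The plan is to transplant the double-plane argument of \Cref{geointer_dbplane} onto the Hirzebruch base $\mbf_n$ and then invoke the non-symplectic criterion of \Cref{intro_prop_nonsymplectic}. First I would record that the pullback $\pi^{\ast}\colon |e+rf| \ra |L_r|$ is a bijection. Since the cover is branched over $B\in|-2K_{\mbf_n}|$, one has $\pi_{\ast}\mso_X \simeq \mso_{\mbf_n}\oplus \mso_{\mbf_n}(K_{\mbf_n})$, so by the projection formula $h^0(X,L_r)=h^0(\mbf_n, e+rf)+h^0(\mbf_n, \mso(-e+(r-n-2)f))$, and the second summand vanishes because any effective divisor numerically equal to $-e+mf$ would meet the fibre class $f$ negatively. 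Hence $\pi^{\ast}$ identifies the two linear systems, and every member of $|L_r|$ is the $\iota$-invariant pullback of a curve on $\mbf_n$, where $\iota$ is the deck involution.

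Next I would set up the tangency--node dictionary. Working locally where the double cover is $y^2 = b$ with $b$ the equation of $B$, the restriction to a curve $l\in|e+rf|$ simply tangent to $B$ at a point $p$ becomes $y^2 = t^2\cdot(\text{unit})$ in a local coordinate $t$ on $l$ centred at $p$, which is an ordinary node of $\pi^{\ast}l$ over $p$, while a transverse intersection yields a smooth point. As members of $|L_r|$ have arithmetic genus $g$, an integral member acquiring exactly $(g-1)$ such nodes is $(g-1)$-nodal of geometric genus one, i.e.\ a nodal elliptic curve; conversely, by the bijection above every nodal elliptic curve in $|L_r|$ is the pullback of a curve in $|e+rf|$ simply tangent to $B$ at the $(g-1)$ images of its nodes. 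In particular each node lies over a point of $B$, so the entire node locus, and with it $C_{en}$, is contained in $\pi^{-1}(B)$.

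To finish the identification I would use irreducibility rather than a sweeping count. The ramification divisor maps isomorphically to the branch locus, so $\pi^{-1}(B)\simeq B$, which (generically $B$ is irreducible) is a smooth irreducible curve. Since $C_{en}$ is nonempty by \Cref{nonemptiness}, one-dimensional by \Cref{def_Cd}, and contained in the irreducible curve $\pi^{-1}(B)$, the two must coincide. The genus then follows from adjunction on $\mbf_n$: with $B=-2K_{\mbf_n}$ and $K_{\mbf_n}^2=8$ one gets $2g(B)-2=(-2K_{\mbf_n})\cdot(-K_{\mbf_n})=2K_{\mbf_n}^2=16$, so $g(C_{en})=g(B)=9$, independently of $n$ and $r$. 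Finally, as $\mbf_n$ is rational we have $\ch_0(\mbf_n)\simeq\mbz$, so by the criterion recalled after \Cref{intro_prop_nonsymplectic} the involution $\iota$ is non-symplectic; its fixed locus is exactly $\pi^{-1}(B)=C_{en}$, and \Cref{intro_prop_nonsymplectic} gives that $C_{en}$ is a constant cycle curve.

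I expect the main obstacle to be the tangency--node dictionary of the second step: one must verify not only the local claim that a simple tangency produces a single node (and a transverse point a smooth point), but also the global statement that the nodal elliptic curves in $|L_r|$ are \emph{precisely} the pullbacks of the simply-tangent curves, with all $(g-1)$ nodes placed on $\pi^{-1}(B)$. Once this dictionary is established, the identification, the genus computation, and the constant-cycle conclusion are essentially formal.
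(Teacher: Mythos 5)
Your proposal is correct and follows essentially the same route as the paper: the identification $\pi^{\ast}\colon |e+rf| \ra |L_r|$, the tangency--node dictionary placing all $(g-1)$ nodes of a nodal elliptic curve on the ramification locus, and the non-symplectic deck involution (via $\ch_0(\mbf_n) \simeq \mbz$, since $\mbf_n$ is rational) with fixed locus $\pi^{-1}(B)$. Your only refinements are welcome ones: you make the linear-system bijection explicit via $\pi_{\ast}\mso_X \simeq \mso_{\mbf_n} \oplus \mso_{\mbf_n}(K_{\mbf_n})$ and the vanishing of $h^0(-e+(r-n-2)f)$, and you close the identification $C_{en} = \pi^{-1}(B)$ by combining nonemptiness with irreducibility of $\pi^{-1}(B) \simeq B$, whereas the paper implicitly relies on the density of simply tangent members of $|e+rf|$ as in its genus-two lemma; your adjunction computation $2g(B)-2 = 2K_{\mbf_n}^2 = 16$ also spells out the genus-$9$ claim the paper merely asserts.
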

	
	This together with the dimension computation in \Cref{moduli_dim_heK3} implies \Cref{mainthm}, namely \Cref{intro_conj} is true for an $18$-dimensional family in each moduli space $\mcf_g$ for $g \geqslant 3$.
	
	\section{Further Discussions on the Conjecture} \label{discussion}
	
	Throughout this section, let $(X,L)$ be a polarized K3 surface of genus $g > 2$ with nonempty Severi variety $V_{L,g-1}$. Recall from \Cref{def_Cd} that  the  curve of elliptic nodes $C_{en}$ of $(X,L)$ generically consists of singularities of nodal elliptic curves in $|L|$.  We have seen from \Cref{easyobserv} that the following condition is sufficient to prove that  $C_{en}$ is a constant cycle curve:
	\begin{center}
	($\ast$) For each nodal elliptic curve $E$ in $|L|$ with normalization $\nu \colon \tilde{E} \ra E$, one has\\ $[P_{i}^{\prime}]+[P_{i}^{\prime \prime}] = [P_{j}^{\prime}]+[P_{j}^{\prime \prime}]$ in $\ch_0(\tilde{E})$ for any $i \neq j$,\\ where $\{P_{i}^{\prime}, P_{i}^{\prime \prime}\}$ is the preimage of the $i$-th nodal point of $E$ under $\nu$. 
	\end{center}

	The main goal of this section is to explain that the sufficient condition ($\ast$)  is too strong to be necessary for $C_{en}$ being a constant cycle curve, in the sense that under this assumption $(X,L)$ almost has to be a hyperelliptic K3 surface (see \Cref{def_HEK3}).

	Let $(X,L)$ be as above. Recall that nodal elliptic curves in $|L|$ form a one-dimensional family over the Severi variety $V_{L,g-1}$. 
	We restrict this family to one of the irreducible components of $V_{L,g-1}$ when $V_{L,g-1}$ is not irreducible. After passing to a finite base change, we obtain a new family $p \colon \mathcal{E} \ra B$ with the following properties:
	\begin{itemize}
		\item The base $B$ is a smooth quasi-projective irreducible curve.
		\item Each fibre $\mce_t \coloneqq p^{-1}(t)$ over a closed point $t \in B$ is a nodal elliptic curve in $|L|$.
		\item The morphism $p$ has $(g-1)$ disjoint sections $\mcd_1, \ldots, \mcd_{g-1}$, whose  restrictions to each fibre $\mce_t$ correspond exactly to its $(g-1)$ nodal points.
	\end{itemize}
	The $(g-1)$ sections $\mcd_1, \ldots, \mcd_{g-1}$ endow the $(g-1)$ nodal points of $\mce_t$ with a natural labeling and we write  $P_{t,i}$ for the $i$-th node of $\mce_t$. 
	
	Since $p \colon \mathcal{E} \ra B$ is a family of projective curves of the same genus over a smooth base, it admits a simultaneous normalization $\nu \colon \tilde{\mathcal{E}} \ra \mathcal{E}$  in the sense of \cite{DiazHarris88idealsassdeformations}, i.e.\ :
	\begin{itemize}
		\item $\nu$ is the normalization of $\mathcal{E}$ (as a surface).
		\item The composition $\tilde{p} \coloneqq p \circ \nu \colon \tilde{\mathcal{E}} \ra \mathcal{E} \ra B$ is a smooth family of elliptic curves, and it is fibrewise the normalization $\nu_{t} \colon \tilde{\mce}_{t} \ra \mce_t$ of the corresponding fibre of $p$ over $t \in B$. 
	\end{itemize}
	
	Let $\tilde{\mcd}_i \coloneqq \nu^{-1}(\mcd_i) \subset \tilde{\mathcal{E}}$. 
	Since $B$ is smooth and $p$ is a smooth morphism, the total space $\tilde{\mathcal{E}}$ is a smooth surface and hence $\tilde{\mcd}_i \subset \tilde{\mathcal{E}}$ is a Cartier divisor for each $1 \leqslant i \leqslant g-1$. Denote by $\{	P_{t,i}^{\prime},P_{t,i}^{\prime \prime}\}$ the preimage of the $i$-th node $P_{t,i} \in \mce_t$ under the normalization  $\nu_{t}$. Then by construction, the restriction of $\tilde{\mcd}_i$ to a fibre $\tilde{\mce}_t$ of $\tilde{p}$ is exactly the set $\{	P_{t,i}^{\prime},P_{t,i}^{\prime \prime}\}$. 
	 
	 \begin{rmk}
	 	If $\tilde{\mcd}_1 \sim \cdots \sim \tilde{\mcd}_{g-1}$, that is, they are linearly equivalent to each other, then the condition ($\ast$) holds and by \Cref{easyobserv} the curve of elliptic nodes $C_{en}$ of $(X,L)$ is a constant cycle curve. Conversely, starting from the condition ($\ast$), one can only assert that $\tilde{\mcd_i} \sim \tilde{\mcd_j}$ up to a vertical divisor in the family $\tilde{p} \colon \tilde{\mce} \ra B$.
	 \end{rmk}

	 The following result indicates that at least for a generic polarized K3 surface $(X,L)$, one should not expect that the equations of zero cycles already hold at the level of Chow groups of elliptic curves.
	\begin{prop}
		Consider the construction above for a polarized K3 surface $(X,L)$ of genus $g>2$ with nonempty $V_{L,g-1}$. If the Cartier divisors $\tilde{\mcd}_1, \ldots, \tilde{\mcd}_{g-1}$ on $\tilde{\mathcal{E}}$ are linear equivalent to each other, then $X$ admits a double cover (induced by the linear system $|L|$).
		
		In other words, under our assumption, $X$ is a hyperelliptic K3 surface and can never be  generic in the moduli space $\mathcal{F}_g$.
	\end{prop}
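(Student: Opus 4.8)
The plan is to reduce the hypothesis to the fibrewise condition $(\ast)$, recognize a general nodal elliptic member $\mce_t \in |L|$ as a double cover of $\mbp^1$, and then use that these curves sweep out $X$ to conclude via \Cref{saintdonat} that $\phi_L$ is generically $2:1$. Throughout I work with a general fibre of $\tilde{p}\colon \tilde{\mce}\ra B$.

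First I would restrict the linear equivalence $\tilde{\mcd}_1 \sim \cdots \sim \tilde{\mcd}_{g-1}$ on $\tilde{\mce}$ to a general fibre $\tilde{\mce}_t$. This yields $[P_{t,i}^{\prime}]+[P_{t,i}^{\prime\prime}] = [P_{t,j}^{\prime}]+[P_{t,j}^{\prime\prime}]$ in $\pic(\tilde{\mce}_t)$ for all $i,j$, i.e.\ condition $(\ast)$; let $A_t$ denote the common degree-two class. Since $\tilde{\mce}_t$ is a smooth elliptic curve and $\deg A_t = 2$, one has $h^0(A_t)=2$ and $|A_t|$ is base-point-free, so it defines a degree-two morphism $\psi_t\colon \tilde{\mce}_t \ra \mbp^1$ with $A_t = \psi_t^{\ast}\mso_{\mbp^1}(1)$. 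Because each effective divisor $P_{t,i}^{\prime}+P_{t,i}^{\prime\prime}$ represents $A_t$, it is a fibre of $\psi_t$, so $\psi_t(P_{t,i}^{\prime})=\psi_t(P_{t,i}^{\prime\prime})$ for every $i$. Hence $\psi_t$ is constant on each pair of branches of the nodes of $\mce_t$ and descends to a degree-two morphism $\bar{\psi}_t\colon \mce_t \ra \mbp^1$ with $\bar{\psi}_t\circ \nu_t = \psi_t$; thus a general $\mce_t$ is a double cover of $\mbp^1$.

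Next I would identify $\phi_L|_{\mce_t}$ with the canonical map of $\mce_t$ and show it is $2:1$. By adjunction $\mso_X(L)|_{\mce_t}\simeq \omega_{\mce_t}$, and the sequence $0\ra \mso_X \ra \mso_X(L)\ra \omega_{\mce_t}\ra 0$ together with $H^1(\mso_X)=0$ shows that $H^0(X,L)\ra H^0(\mce_t,\omega_{\mce_t})$ is surjective, so $\phi_L|_{\mce_t}$ is the complete canonical map of the Gorenstein curve $\mce_t$. Since $\bar{\psi}_t$ is a double cover with splitting type $\bar{\psi}_{t\ast}\mso_{\mce_t}\simeq \mso_{\mbp^1}\oplus \mso_{\mbp^1}(-(g+1))$ (forced by $p_a(\mce_t)=g$), the standard double-cover computation gives $\omega_{\mce_t}\simeq \bar{\psi}_t^{\ast}\mso_{\mbp^1}(g-1)$ and $H^0(\mce_t,\omega_{\mce_t})=\bar{\psi}_t^{\ast}H^0(\mbp^1,\mso_{\mbp^1}(g-1))$. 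Therefore the canonical map factors as $\bar{\psi}_t$ followed by the embedding of $\mbp^1$ as a rational normal curve of degree $g-1$, so $\phi_L|_{\mce_t}$ is $2:1$ onto its image. Finally, as the curves $\mce_t$ cover $X$, a birational $\phi_L$ would be birational on a general $\mce_t$, contradicting the above; hence $\phi_L$ is not birational, and since $L$ is ample (so $|L|$ has no fixed component and $L^2=2g-2\geqslant 4$), \Cref{saintdonat} forces $\phi_L$ to be generically $2:1$. Thus $X$ is a hyperelliptic K3 surface, and by the dimension count of \Cref{moduli_dim_heK3} such surfaces lie in a locus of codimension one, so $(X,L)$ cannot be generic in $\mcf_g$.

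The main obstacle I anticipate lies in the passage from the smooth normalization $\tilde{\mce}_t$ to the singular curve $\mce_t$: one must carefully transfer the degree-two pencil $|A_t|$ down across the nodes, verify that $\psi_t$ genuinely descends to a \emph{morphism} $\bar{\psi}_t$ on the nodal curve, and control the dualizing sheaf $\omega_{\mce_t}$ and its global sections on this merely Gorenstein curve so that the canonical map is verified to be $2:1$ rather than birational onto a degenerate image. Keeping track of genericity of $t$ (so that the restriction of linear equivalence and the covering property both hold) is a secondary point to be handled.
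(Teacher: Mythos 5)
Your proof is correct, but it takes a genuinely different route from the paper's. The paper fixes a nodal elliptic member $E \in |L|$, forms the degree-two pencil $M_i = \mso_{\tilde{E}}(P_i^{\prime}+P_i^{\prime\prime})$ on the normalization, and compares the two pullbacks of $\mso_{\mbp^g}(1)$ along the outer rectangle of the diagram $\tilde{E} \ra \mbp^1 \ra \mbp^{g-1} \ra \mbp^g$ versus $\tilde{E} \ra E \ra X \ra \mbp^g$: the double point formula gives $(\phi_L \circ i_E \circ \nu)^{\ast}\mso_{\mbp^g}(1) \simeq \mso_{\tilde{E}}(\sum_i(P_i^{\prime}+P_i^{\prime\prime}))$, and the hypothesis identifies this with $\mso_{\tilde{E}}((g-1)(P_i^{\prime}+P_i^{\prime\prime}))$, whence the rectangle commutes up to an automorphism of $\mbp^g$ and every nodal elliptic member maps generically $2:1$ under $\phi_L$. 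You instead restrict the hypothesis to a general fibre, descend the $g^1_2$ across the nodes to an honest double cover $\bar{\psi}_t \colon \mce_t \ra \mbp^1$ of the nodal curve itself, identify $\phi_L|_{\mce_t}$ with the complete canonical map of the Gorenstein curve $\mce_t$ (via adjunction and $H^1(\mso_X)=0$), and then show by the splitting $\bar{\psi}_{t\ast}\mso_{\mce_t} \simeq \mso_{\mbp^1}\oplus\mso_{\mbp^1}(-(g+1))$ and $\omega_{\mce_t}\simeq \bar{\psi}_t^{\ast}\mso_{\mbp^1}(g-1)$ that all of $H^0(\mce_t,\omega_{\mce_t})$ is pulled back from $\mbp^1$, so the canonical map factors through $\bar{\psi}_t$. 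The endgames coincide (a dominating one-dimensional family of $2:1$ members plus the dichotomy of \Cref{saintdonat}). What your route buys is precision at the one delicate point of the paper's argument: deducing that isomorphic pullbacks of $\mso_{\mbp^g}(1)$ make the rectangle commute up to $\textup{PGL}_{g+1}$ tacitly requires that the two maps are given by matching $g$-dimensional subsystems of $H^0(\tilde{E},(g-1)M_i)$, and your explicit computation that the sections of $\omega_{\mce_t}$ are exactly those pulled back along $\bar{\psi}_t$ supplies precisely this; the cost is the extra duality machinery for the flat double cover of the nodal curve, which the paper avoids by staying on the smooth normalization. One small repair: your parenthetical that $|L|$ has no fixed component ``since $L$ is ample'' is not a valid inference on K3 surfaces (ample linear systems of the form $kE+\Gamma$ can have a fixed $(-2)$-curve); the correct justification, used in the paper via \cite[Thm.~3.1]{saintdonat74projmodelofK3}, is that $V_{L,g-1}\neq\emptyset$ provides an integral member of $|L|$, which rules out fixed components and yields base-point-freeness.
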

	
	\begin{proof}
		According to
		\cite[Thm.~3.1]{saintdonat74projmodelofK3}, the existence of an irreducible curve in $|L|$ ensures that the linear system $|L|$ is base point free, hence $|L|$ induces a morphism $\phi_{L} \colon X \ra \mbp^g$. Fix a nodal elliptic curve $E \coloneqq \mce_t$ in $|L|$. Denote its $i$-th node by $P_i$ and let  $\{P_{i}^{\prime}, P_{i}^{\prime \prime}\}$ be the preimage of $P_i$ under the normalization $\nu \colon \tilde{E} \ra E$.
		
		The restriction $L|_{E}$ of the line bundle $L$ to $E$ also has no base points and the induced morphism $\phi_{L|_{E}}$ is the restriction of $\phi_{L}$ to $E$. Consider the line bundle $M_i \coloneqq \mso_{\tilde{E}}(P_i^{\prime}+P_{i}^{\prime \prime})$. Since $\tilde{E}$ is smooth elliptic, the linear system $|M_i|$ is a $g_2^1$ and induces a  morphism $\phi_{M_{i}}$ of degree two onto $\mbp^1$. By construction, the morphism $\phi_{M_{i}}$ factors through $E$. Let $\nu_{g-1} \colon \mbp^1 \ra \mbp^{g-1}$ be the Veronese embedding. Then we have the following diagram
		\begin{equation} \label{diagram}
			\begin{tikzcd}
				\tilde{E} \arrow[r, "\nu"] \arrow[d, "\phi_{M_{i}}"] & E \arrow[r, "i_{E}"] \arrow[d, "\phi_{L|_{E}}"] & X \arrow[d, "\phi_{L}"] \\
				\mbp^1 \arrow[r, "\nu_{g-1}"]                        & \mbp^{g-1} \arrow[r, "i"]                       & \mbp^{g}               
			\end{tikzcd}
		\end{equation}
		with  the right square commutative.
		
		We claim that the outer rectangle of \eqref{diagram} also commutes (up to an automorphism of $\mbp^{g}$) and this is enough to imply the statement. In fact, assuming the claim, then every nodal elliptic curve $E \in |L|$ maps generically $2 \ \colon 1$ onto its image under the morphism $\phi_L$. Since the one-dimensional family $p \colon \mce \ra B$ of nodal elliptic curves in $|L|$ dominates $X$,  we know immediately from \Cref{saintdonat} that the morphism  $\phi_L$ is indeed a double cover onto its image and by definition  $(X,L)$ is a hyperelliptic K3 surface.

		Now we prove the claim. It suffices to show that the following two pullbacks of the line bundle $\mso_{\mbp^g}(1)$ 
		\begin{align*}
			&(i \circ \nu_{g-1} \circ \phi_{M_{i}})^{\ast}\mso_{\mbp^g}(1) \simeq  \phi_{M_{i}}^{\ast}(\mso_{\mbp^1}(g-1)) 
			\simeq  \mso_{\tilde{E}}((g-1)(P_i^{\prime}+P_{i}^{\prime \prime}))
			\\ &(\phi_{L} \circ i_{E} \circ \nu)^{\ast}\mso_{\mbp^g}(1) \simeq (i_{E} \circ \nu)^{\ast} \mso_X(E) \simeq \mso_{\tilde{E}}(\sum_{i=1}^{g-1}(P_i^{\prime} + P_{i}^{\prime \prime})) 
		\end{align*}
		along two different routes in the diagram \eqref{diagram} are isomorphic. Here  for the last equality, we apply again the double point formula in \cite[Thm.~9.3]{fulton1998intersection} for the morphism $i_E \circ \nu \colon \tilde{E} \ra X$ of smooth varieties and use the identifications  $$\ch^1(\tilde{E}) \simeq \pic(\tilde
		{E}), \ \ \ch^1(X) \simeq \pic(X).$$ Since we assume that $\tilde{\mcd}_1, \ldots, \tilde{\mcd}_{g-1}$ are linear equivalent to each other, so are their restrictions  $\tilde{\mcd_i}|_{\tilde{E}} = P_i^{\prime} + P_i^{\prime \prime}$ to the fibre $\tilde{E}$, hence   the two pullbacks of $\mso_{\mbp^g}(1)$ above are isomorphic and this completes the proof. 
	\end{proof}
	
	\begin{rmk}
		Note that when $V_{L,g-1}$ is not irreducible, then the argument above  involves a choice of the family of curves $p \colon \mce \ra B$ (recall that the base $B$ dominates an irreducible component of $V_{L,g-1}$). However, this choice plays no role in our argument above: For dimension reasons, the chosen family of nodal elliptic curves always dominates the K3 surface $X$ and our proof still works without any change.
	\end{rmk}
	\medskip
	
	\printbibliography
	
\end{document}